\theoremstyle{plain}
\newtheorem{thm}{Theorem}[section]
\newtheorem{prop}[thm]{Proposition}
\newtheorem{lem}[thm]{Lemma}
\newtheorem{cor}[thm]{Corollary}
\theoremstyle{definition} 
\newtheorem{dfn}[thm]{Definition}
\theoremstyle{remark}
\newtheorem{rem}[thm]{Remark}
\newtheorem{note}[thm]{Notation}
\newtheorem{q}[thm]{Question}
\renewenvironment{itemize}
{%
\begin{list}{\parbox{1em}{$\bullet$}}
{%
\setlength{\topsep}{0em}
\setlength{\itemindent}{0em}
\setlength{\leftmargin}{2.5em}
\setlength{\rightmargin}{0em}
\setlength{\labelsep}{.5em}
\setlength{\labelwidth}{2em}
\setlength{\itemsep}{0em}
\setlength{\parsep}{0em}
\setlength{\listparindent}{0em}
}
}{\end{list}}
\begin{document}
\title{On coarse geometric aspects of Hilbert geometry}
\author{Ryosuke Mineyama\hspace{-2pt}
\thanks{Department of Mathematics, Graduate School of Science, Osaka University,
		Toyonaka, Osaka 560-0043, Japan, 
		\texttt{r-mineyama@cr.math.sci.osaka-u.ac.jp}}
\thanks{Partly supported by Grant-in-Aid for JSPS Fellows No.13J01771.},
		\quad
		Shin-ichi Oguni\hspace{-2pt}
\thanks{Department of Mathematics, Faculty of Science, Ehime University,
		Matsuyama 790-8577, Japan, 
		\texttt{oguni@math.sci.ehime-u.ac.jp}.}
\thanks{Partly supported by JSPS Grant-in-Aid for Young Scientists (B) No.24740045, 16K17595.}}
\date{\empty}

\maketitle

\begin{abstract}
    We begin a coarse geometric study of Hilbert geometry.
	Actually we give a necessary and sufficient condition for the natural boundary of a Hilbert geometry 
	to be a corona, which is a nice boundary in coarse geometry. 
	In addition, we show that any Hilbert geometry is uniformly contractible and with coarse bounded geometry. 
	As a consequence of these we see that the coarse Novikov conjecture 
	holds for a Hilbert geometry with a mild condition. Also we show that 
	the asymptotic dimension of any two-dimensional Hilbert geometry is just two.
	This implies that the coarse Baum-Connes conjecture holds for any two-dimensional Hilbert geometry
	via Yu's theorem.
	
	\vspace{8pt}
	\noindent{\bf Keywords:} Hilbert geometry, corona, coarse Baum-Connes conjecture, 
	coarse Novikov conjecture, asymptotic dimension.
	
	\vspace{4pt}
	\noindent 2010MSC: 51F99. 
\end{abstract}

%------------------------------------------
%	1:Introduction
%------------------------------------------

\section{Introduction}
Coarse geometry studies metric spaces
by neglecting small-scale structures and thus focusing on large-scale ones \cite{Nowak-Yu,Roe2}.
For example one does not distinguish all bounded metric spaces, and also
identifies the Euclidean space $\mathbb{R}^n$ with its integer lattice $\mathbb{Z}^n$.
The coarse Novikov conjecture and its stronger version, that is,
the coarse Baum-Connes conjecture \cite{H-R95, Roe1, Yu95} are
interesting by the following two reasons at least. 

\begin{enumerate}
	\item These have applications to differential topology of manifolds related to the Novikov conjecture \cite{H-R00}.
	
	\item These are targets of applications of some important coarse
	geometric properties, for instance having finite asymptotic dimension and 
	coarse embeddability into a Hilbert space, which are sufficient conditions 
	for a metric spaces with coarse bounded geometry to satisfy 
	the coarse Baum-Connes conjecture, respectively, showed by Yu \cite{Yu98, Yu00}.
\end{enumerate}

\noindent Note that whether the coarse Novikov conjecture (resp. the coarse Baum-Connes conjecture) 
holds or not is invariant under the equivalence in the sense of coarse geometry. 
There are a number of positive results for these conjectures based on the above sufficient conditions.  
Also by other approaches, it is known that 
``non-positively curved'' spaces like proper geodesic Gromov-hyperbolic spaces 
and CAT(0) spaces \cite{Bridson-Haefliger} satisfy the coarse Baum-Connes conjecture \cite{F-O16, H-R95, Willett}.
Especially boundaries called coronae played an essential role for the case of  
proper geodesic Gromov-hyperbolic spaces \cite{H-R95}. 

A Hilbert geometry is defined on a bounded convex domain $X$
in the Euclidean space $\mathbb{R}^n$ endowed with the Hilbert metric $d$. 
This is a classical and naive geometric object, 
the study of which is still being developed actively \cite{P-T}. 
Now we can formulate somewhat new questions from a coarse geometric point of view. 
The following are typical ones. 

\begin{q}\label{question1}
\begin{itemize}
	\item[(1)]
	Does a Hilbert geometry satisfy the coarse Novikov conjecture or the coarse Baum-Connes conjecture?
	\item[(2)]
	Can we determine the asymptotic dimension of a Hilbert geometry? In particular is it finite?
    \item[(3)]
    Is a Hilbert geometry coarsely embeddable into a Hilbert space?
\end{itemize}
\end{q}

We expect that the coarse geometric investigation expands 
the study of Hilbert geometry and that, conversely, 
Hilbert geometry provides interesting examples for coarse geometry.  

An $n$-dimensional unit open ball equipped with the Hilbert metric is 
the projective model of the $n$-dimensional hyperbolic space,  
any $n$-dimensional polygon with the Hilbert metric is bi-Lipschitz equivalent to 
the Euclidean space $\mathbb{R}^n$ \cite{Veronicos}.
Thus Question \ref{question1} is positively solved in such spaces \cite{Nowak-Yu,Roe2}, 
but it is not known for general Hilbert geometries.  
A Hilbert geometry $(X,d)$ is not necessarily either CAT(0) or Gromov-hyperbolic. 
Indeed an investigation of Kelly and Straus \cite{Kelly-Straus} implies that 
$(X,d)$ is CAT(0) if and only if $X$ is an ellipsoid.
Besides if $(X,d)$ is Gromov-hyperbolic then 
the boundary $\partial X=\overline{X}\setminus X$ must be of differentiability class $C^1$
where $\overline{X}$ is the closure of $X$ in $\mathbb{R}^n$ \cite{Karlsson-Noskov}.  
Hence it is difficult to answer Question \ref{question1} (1) by only using known results in coarse geometry. 
On the other hand the above facts indicate that 
the shape or smoothness of the boundary $\partial X$ dominates geometric properties of the Hilbert geometry $(X,d)$.  
Now a question naturally occurs. 

\begin{q}\label{question2}
	Is the natural boundary of a Hilbert geometry a corona? 
\end{q}

In this paper, we discuss Questions \ref{question1} and \ref{question2}. 
The following first main theorem completely answers Question \ref{question2}. 

\begin{thm}\label{thm:main-1}
	Let $X \subset \mathbb{R}^n$ be a non-empty bounded convex domain. 
	Then the boundary $\partial X$ is a corona of the Hilbert geometry $(X,d)$ 
	if and only if the closure $\overline{X}$ is properly convex in $\mathbb{R}^n$.
\end{thm}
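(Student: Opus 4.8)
The plan is to verify, in each direction, the defining property of a corona in sequential form. Writing $\overline{X}=X\cup\partial X$ for the Euclidean compactification, the condition I will use is that $\partial X$ is a corona of $(X,d)$ exactly when, for every $R>0$ and all sequences $(x_n),(y_n)$ in $X$ with $d(x_n,y_n)\le R$, convergence $x_n\to\xi\in\partial X$ forces $y_n\to\xi$; this is the sequential reformulation of the requirement that controlled sets collapse at the boundary (equivalently, that every $f\in C(\overline{X})$ restricts to a Higson function on $X$). Throughout I would work with the cross-ratio description of the metric: if the chord of $\overline{X}$ through distinct $x,y\in X$ meets $\partial X$ in the points $a,b$ ordered $a,x,y,b$, then $d(x,y)=\tfrac{1}{2}\log\frac{|a y|\,|b x|}{|a x|\,|b y|}$, so that $d(x,y)$ is large precisely when $x$ is close to $a$ and $y$ close to $b$.

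For sufficiency I assume $\overline{X}$ strictly convex and take $x_n\to\xi$ with $d(x_n,y_n)\le R$; passing to a subsequence I may assume $y_n\to\eta\in\overline{X}$, and I must show $\eta=\xi$. First $\eta\notin X$: otherwise $(y_n)$ would remain in a compact subset of $(X,d)$ while $d(o,x_n)\to\infty$ for a fixed basepoint $o$ (Euclidean boundary points lie at infinite Hilbert distance), giving $d(x_n,y_n)\to\infty$. So suppose $\eta\in\partial X$ with $\eta\ne\xi$. Strict convexity makes the open segment $(\xi,\eta)$ interior and forces the line $L$ through $\xi,\eta$ to meet $\overline{X}$ in exactly $[\xi,\eta]$, at whose endpoints the chord-endpoint map is continuous; hence the endpoints $a_n,b_n$ of the chord through $x_n,y_n$ satisfy $a_n\to\xi$ and $b_n\to\eta$. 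Then $|a_n x_n|\to 0$ and $|b_n y_n|\to 0$ while $|a_n y_n|,|b_n x_n|\to|\xi-\eta|>0$, so the cross-ratio diverges and $d(x_n,y_n)\to\infty$, a contradiction. Thus every subsequential limit of $(y_n)$ equals $\xi$, i.e.\ $y_n\to\xi$, and $\partial X$ is a corona.

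For necessity I prove the contrapositive. If $\overline{X}$ is not strictly convex there is a nondegenerate segment $[p,q]\subset\partial X$. Fixing distinct $\xi,\eta$ in the open segment $(p,q)$ and an interior point $o\in X$, I set $x_t=(1-t)\xi+t o$ and $y_t=(1-t)\eta+t o$ for $t\in(0,1)$; these lie in $X$, satisfy $x_t\to\xi$, $y_t\to\eta$, and the chord $\ell_t$ through them is parallel to $[p,q]$. The key estimate is that $\ell_t$ contains the slice $[(1-t)p+t o,\,(1-t)q+t o]$ of the triangle $\mathrm{conv}(o,[p,q])\subseteq\overline{X}$; comparing positions along $\ell_t$ yields $\liminf|a_t x_t|\ge|\xi-p|>0$ and $\liminf|b_t y_t|\ge|q-\eta|>0$, while the numerator of the cross-ratio stays bounded since everything lies in the compact set $\overline{X}$. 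Hence $d(x_t,y_t)$ remains bounded as $t\to 0$, and choosing $t_n\to 0$ produces controlled sequences converging to the distinct boundary points $\xi\ne\eta$, violating the corona condition.

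I expect the delicate step to be the endpoint analysis in the sufficiency direction: showing that strict convexity forces $a_n\to\xi$ and $b_n\to\eta$, which is exactly the continuity of the chord-endpoint map at strictly convex boundary points and is what makes the cross-ratio blow up. The necessity direction is comparatively concrete once the cone/triangle lower bound on $|a_t x_t|$ and $|b_t y_t|$ is established, since it converts the presence of a boundary segment directly into bounded-distance sequences with distinct limits.
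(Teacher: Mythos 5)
Your proof is correct, and its geometric core coincides with the paper's, but it is organized through a different interface: you verify the sequential (``controlled sets collapse at the boundary'') characterization of a corona, whereas the paper works directly with Higson functions. Concretely, your sufficiency argument is the sequential form of the paper's Lemma \ref{lem:1}: the paper shows by contradiction that outside a bounded set $M_{\delta,C}$ the bound $d(x,y)\le C$ forces $|x y|<\delta$, extracting chords whose limit would place a point of $\partial X$ strictly inside a chord between boundary points (via Lemma \ref{lem:3}), contradicting strict convexity (Lemma \ref{lem:4}); you run the contrapositive of the same cross-ratio estimate, showing the chord endpoints converge to the two distinct boundary limits so that the cross-ratio blows up. Your ``continuity of the chord-endpoint map'' is exactly the step the paper handles with Lemma \ref{lem:3}, and your justification --- strict convexity forces $L\cap\overline{X}=[\xi,\eta]$, so any subsequential limits of the endpoints must be $\xi$ and $\eta$ --- is the right one, though a complete write-up should spell out the compactness/subsequence extraction. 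For necessity, both proofs use the same parallel-chord construction along the boundary segment; the paper obtains the exact uniform bound $e^{d(x,y)}\le |\xi\beta||\eta\alpha|/\left(|\xi\alpha||\eta\beta|\right)$ from projective invariance (Proposition \ref{prop:2} together with Remark \ref{rem:2}), while your triangle-slice lower bound on $|a_t x_t|$, $|b_t y_t|$ combined with the diameter upper bound on the numerator is cruder but entirely sufficient, and it avoids Proposition \ref{prop:2} altogether. Two caveats. First, the equivalence you assert between the corona condition and the sequential condition is standard but is not the paper's definition; a self-contained proof needs the routine argument (compactness of $\overline{X}$, a continuous function separating two distinct boundary points, and the fact --- essentially Lemma \ref{lem:bounded} --- that $d$-bounded sets are exactly those whose Euclidean closure avoids $\partial X$, which is also what justifies your claim that $x_n\to\xi\in\partial X$ implies $d(o,x_n)\to\infty$). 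Second, your normalization of the Hilbert metric carries a factor $1/2$ that the paper does not use; this is harmless, since it affects no boundedness or divergence statement.
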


\noindent
This theorem enables us to answer Question \ref{question1} (1) positively in a certain case by 
combining with Propositions \ref{prop:3} and \ref{prop:1},  
which claim that any Hilbert geometry is uniformly contractible and with coarse bounded geometry.
 
\begin{cor}\label{cor:1}
	Let $X \subset \mathbb{R}^n$ be a non-empty bounded convex domain and 
	the closure $\overline{X}$ be properly convex in $\mathbb{R}^n$. 
	Then the Hilbert geometry $(X,d)$ satisfies the coarse Novikov conjecture. 
\end{cor}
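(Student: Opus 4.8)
The plan is to combine Theorem~\ref{thm:main-1} with Propositions~\ref{prop:1} and~\ref{prop:3} and to feed the resulting structure into the standard corona criterion for the coarse Novikov conjecture. By Theorem~\ref{thm:main-1}, the hypothesis that $\overline{X}$ is strictly convex is precisely what makes the topological boundary $\partial X=\overline{X}\setminus X$ a corona of $(X,d)$; thus the inclusion of $X$ into the compact convex body $\overline{X}$ is a coarse compactification with metrizable corona $\partial X$. By Proposition~\ref{prop:1} the Hilbert geometry has bounded coarse geometry, and by Proposition~\ref{prop:3} it is uniformly contractible. These are exactly the hypotheses under which the machinery below applies.

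First I would use uniform contractibility together with bounded coarse geometry to pass from the coarse world to the topological one. For such spaces the coarse $K$-homology agrees with the locally finite $K$-homology, and the coarse assembly map is compatible with this identification; moreover, since $X$ is the interior of the compact body $\overline{X}$, the locally finite $K$-homology of $X$ is the relative $K$-homology $K_*(\overline{X},\partial X)$. In this way the source and target of the coarse assembly map acquire descriptions in terms of the compactification $\overline{X}$ and its corona $\partial X$, reducing the conjecture to a statement that can be attacked through the boundary.

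Next I would invoke the coarse co-assembly map of Emerson and Meyer, which is the natural companion of the coarse assembly map for bounded geometry spaces. The existence of a corona provides the geometric input needed to produce a dual-Dirac type element and to identify the $K$-theory entering the coarse co-assembly map with $K^*(\partial X)$; from this one shows that the coarse co-assembly map is an isomorphism. By the Emerson--Meyer duality, an isomorphism on the co-assembly side forces the coarse assembly map to be split injective, which is exactly the coarse Novikov conjecture for $(X,d)$.

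The main obstacle is the construction and verification step just described: deducing that the coarse co-assembly map is an isomorphism from the mere existence of the corona, i.e.\ checking that the metrizable coarse compactification $\overline{X}$ genuinely supplies the dual element required by the Dirac--dual-Dirac method. This is where uniform contractibility and bounded coarse geometry are used in an essential way, and it is the substantive content hidden behind the phrase ``standard argument''; the surrounding bookkeeping -- the long exact sequence of the pair $(\overline{X},\partial X)$ and the naturality of the assembly and co-assembly maps -- is then routine.
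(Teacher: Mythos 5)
Your opening moves coincide with the paper's: Theorem~\ref{thm:main-1} makes $\partial X$ a corona, Propositions~\ref{prop:3} and~\ref{prop:1} give uniform contractibility and bounded coarse geometry, and these two facts make the coarsening map $c(X)\colon K_\bullet(X)\to KX_\bullet(X)$ an isomorphism, which is exactly how the paper passes from the coarse to the topological picture. The gap is in your third paragraph. The existence of a corona does not yield an isomorphism for the Emerson--Meyer coarse co-assembly map, and it cannot: the one-point compactification of any proper metric space is already a metrizable coarse compactification, so ``having a corona'' is by itself almost vacuous $K$-theoretically. What a corona $\partial X$ actually provides is a $*$-homomorphism from $C(\partial X)\otimes\mathcal{K}$ into the stable Higson corona $\mathfrak{c}(X)$, hence a map from $\widetilde{K}^\bullet(\partial X)$ into $K_\bullet(\mathfrak{c}(X))$; composing with co-assembly and using that $\overline{X}$ is contractible (Proposition~\ref{prop:homeo} --- an ingredient your proposal never invokes), one can show this composite is an isomorphism onto the target, i.e.\ that co-assembly is \emph{split surjective}. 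Injectivity of co-assembly is a far stronger statement: it would require controlling all of $K_\bullet(\mathfrak{c}(X))$, which is much larger than $\widetilde{K}^\bullet(\partial X)$, and Emerson--Meyer establish it only under additional hypotheses (scaleability, or dual-Dirac elements in the group case), never from the mere existence of a corona. The closing step is also not formal: ``co-assembly isomorphism $\Rightarrow$ assembly split injective'' is the Dirac--dual-Dirac method, not an abstract duality, and the pairing version of that argument in general only detects classes modulo torsion.

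The paper avoids all of this by staying on the $K$-homology side. The corona yields the Higson--Roe map $b_{\partial X}\colon K_\bullet(C^\ast X)\to\widetilde{K}_{\bullet-1}(\partial X)$ and a commutative diagram in which $b_{\partial X}\circ\mu(X)\circ c(X)=\partial_{\partial X}$, the connecting map of the pair $(\overline{X},\partial X)$. Since $\overline{X}$ is contractible by Proposition~\ref{prop:homeo}, $\partial_{\partial X}$ is an isomorphism; since $c(X)$ is an isomorphism, a diagram chase gives injectivity of $\mu(X)$ integrally, with no co-assembly map and no non-degeneracy of pairings needed. If you want to salvage your dual route, the statement you can actually prove is split surjectivity of co-assembly, and the pairing argument then closes integrally here only because the relevant groups are free cyclic ($X$ is homeomorphic to $\mathbb{R}^n$); but in either formulation the essential input is the contractibility of $\overline{X}$, not the existence of the corona, and that is the idea missing from your proposal.
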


\noindent
Our second main theorem answers partially Question \ref{question1} (2).
 
\begin{thm}\label{thm:main-2}
	The asymptotic dimension of any $2$-dimensional Hilbert geometry is equal to $2$.   
\end{thm}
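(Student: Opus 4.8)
The plan is to establish the two inequalities $\operatorname{asdim}(X,d)\ge 2$ and $\operatorname{asdim}(X,d)\le 2$ separately, where $X\subset\mathbb{R}^2$ is a non-empty bounded convex domain carrying its Hilbert metric $d$. Throughout I will use that $(X,d)$ is a proper geodesic space homeomorphic to $\mathbb{R}^2$ (fixing $o\in X$, polar coordinates identify $X$ with $[0,\infty)\times S^1$, since Euclidean straight segments issuing from $o$ are $d$-geodesics), that it has bounded coarse geometry (Proposition \ref{prop:1}), and that it is uniformly contractible (Proposition \ref{prop:3}).

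For the lower bound I would invoke the principle that a uniformly contractible space of bounded coarse geometry homeomorphic to $\mathbb{R}^2$ cannot have asymptotic dimension below $2$. The mechanism is cohomological: for such a space the coarse cohomology agrees with the compactly supported cohomology of the underlying manifold (Roe), so $HX^2(X)\cong H^2_c(\mathbb{R}^2)\ne 0$, whereas coarse cohomology vanishes in degrees exceeding $\operatorname{asdim}(X,d)$; together these force $\operatorname{asdim}(X,d)\ge 2$. Equivalently one may quote the known fact that a uniformly contractible manifold of bounded geometry has asymptotic dimension at least its topological dimension, which rests on the non-triviality of the fundamental class. Given Propositions \ref{prop:1} and \ref{prop:3}, this step is essentially automatic.

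For the upper bound I would apply the Hurewicz-type inequality for asymptotic dimension of Bell and Dranishnikov to the $1$-Lipschitz radial map $\rho=d(o,\cdot)\colon (X,d)\to[0,\infty)$. Since $\operatorname{asdim}[0,\infty)=1$, their theorem reduces the claim to the following statement: for every $R>0$ the family of annuli $A_a=\rho^{-1}\big([a-R,a+R]\cap[0,\infty)\big)$, $a\ge 0$, has asymptotic dimension at most $1$ uniformly in $a$. Each such annulus deformation retracts onto the metric sphere $S(o,a)$, which is a Jordan curve in $X$, so my plan is to cut $S(o,a)$ along the angular coordinate into $d$-short arcs and to two-colour them in the standard manner that realises $\operatorname{asdim}\le 1$ for circles, keeping the diameters of the resulting pieces bounded in terms of $R$ alone. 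The distance estimates needed on $S(o,a)$ I would extract from the cross-ratio formula for $d$ together with the convexity of $\overline{X}$.

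The main obstacle is precisely this last point: proving that the family $\{S(o,a)\}_{a\ge0}$ of metric spheres has \emph{uniformly} bounded asymptotic dimension $1$. Because the Hilbert metric admits genuine shortcuts through the interior of $X$, the subspace metric on $S(o,a)$ is generally far from its intrinsic arc-length metric and may degenerate near flat pieces of $\partial X$; the delicate task is therefore to produce, uniformly in $a$, a two-family cover of $S(o,a)$ by $d$-bounded, $R$-disjoint arcs. I expect convexity and the cross-ratio description of $d$ to enter quantitatively here, and this is also where dimension two is indispensable, since only then are the spheres one-dimensional objects for which a two-colouring suffices. Once this uniform estimate is secured, the Hurewicz-type inequality gives $\operatorname{asdim}(X,d)\le 1+1=2$, and combined with the lower bound the theorem follows.
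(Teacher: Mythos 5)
Your lower bound is essentially the paper's own argument (Lemma \ref{lem:HX} and Proposition \ref{prop:below}): uniform contractibility makes the character map $HX^2(X)\to H_c^2(X)$ an isomorphism, $H_c^2(X)\neq 0$ because $X$ is homeomorphic to $\mathbb{R}^2$, and coarse cohomology above the asymptotic dimension must vanish since the character map factors through the nerve of an anti-\v{C}ech cover; that half is fine (bounded coarse geometry is not even needed there). The gap is in the upper bound. Reducing via the Bell--Dranishnikov Hurewicz-type theorem applied to the $1$-Lipschitz radial function $\rho=d(o,\cdot)$ is a legitimate reformulation, but it converts the theorem into the statement that the spheres $S(o,a)$ (equivalently the annuli $\rho^{-1}([a-R,a+R])$) have asymptotic dimension at most $1$ \emph{uniformly in} $a$ --- and this statement, which you yourself flag as ``the main obstacle,'' is precisely the substance of the theorem and is nowhere proved in your proposal. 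Announcing that you will ``cut $S(o,a)$ into $d$-short arcs and two-colour them,'' with the required estimates ``expected'' to follow from convexity and the cross ratio, defers exactly the step that carries all the difficulty. Concretely, a two-colouring with $r$-multiplicity $2$ must exclude two failure modes that your outline does not address: an arc whose endpoints are $r$-close through the interior of $X$ even though the arc has large diameter, and two non-adjacent arcs that are $r$-close via Hilbert ``shortcuts.'' If these are not excluded, the arc cover only yields $r$-multiplicity $3$ on each sphere, i.e.\ uniform asymptotic dimension $\le 2$ for the fibres, and Hurewicz then gives only $\mathrm{asdim}(X)\le 1+2=3$.

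What is missing is the quantitative input that the paper develops for exactly this purpose, and it is worth noting that the paper's route avoids your obstacle rather than overcoming it. The key fact is Lemma \ref{lem:6}: two radial rays diverge monotonically, $d(\ell_1(s),\ell_2(s))\le d(\ell_1(t),\ell_2(t))$ for $s<t$, proved via the projective invariance of the cross ratio (Proposition \ref{prop:2}, Lemma \ref{lem:10}). This yields Lemma \ref{lem:11}: the radial shadow on $S_i$ of any $r$-ball has diameter at most $4r$. With that in hand, the paper never needs uniform one-dimensionality of individual spheres: it greedily cuts each sphere $S_i$ into arcs satisfying $\bigstar 1$ and $\bigstar 2$ (Lemma \ref{lem:key}), arranges the decompositions of consecutive spheres so that their marker points interleave (the ``admissible'' decompositions), and takes as a cover of $X$ the radial rectangles $U_{i,j}$ of diameter at most $10R$. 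The shadow estimate plus interleaving shows directly that this two-dimensional cover has $r$-multiplicity $3$, hence $\mathrm{asdim}(X)\le 2$, with no Hurewicz theorem and no two-colouring of a circle; condition $\bigstar 1$ is used only to guarantee that a small shadow cannot contain two marker points, a much weaker requirement than the $r$-disjointness of same-coloured arcs your plan needs. So as it stands, your proposal is a correct lower bound plus a correct reduction, with the core of the theorem left open; to complete your route you would have to prove the uniform sphere estimate, and the natural tools for doing so are the very lemmas above.
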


\noindent
Putting together with Proposition \ref{prop:1}, we see that
any $2$-dimensional Hilbert geometry satisfies the coarse Baum-Connes conjecture via Yu's result \cite{Yu98}. 
Another consequence of this theorem is that 
any $2$-dimensional Hilbert geometry can be coarsely embedded into a Hilbert space.
This follows from a well-known fact that
any metric space with coarse bounded geometry can be coarsely embedded into a Hilbert space
whenever the space has finite asymptotic dimension \cite{Roe2}. 

\begin{note}
	We collect some notations which will frequently appear in this paper.
	We always assume that $2\le n < \infty$.
	\begin{itemize}
	\item
		A \emph{line} through $a,b \in \mathbb{R}^n$ is a set 
		$\{\ t a + (1-t) b \in \mathbb{R}^n \ \vert\ t \in \mathbb{R}\ \}$.
	\item
		For $x,y \in \mathbb{R}^n$, $[x,y]$ denotes the (directed) segment from $x$ to $y$. 
	\item
		For $x,y \in \mathbb{R}^n$, $|x y|$ denotes the Euclidean distance between $x$ and $y$.
	\item
		We write the closure of an open set $A \subset \mathbb{R}^n$ as $\overline{A}$
		and put $\partial A := \overline{A} \setminus A$.
		For a closed set $B \subset \mathbb{R}^n$, we put $\partial B := B \setminus \mathrm{int}(B)$ 
		where $\mathrm{int}(B)$ is the interior of $B$.
	\item
		For a bounded convex domain $X \subset \mathbb{R}^n$, 
		a {\em{chord}} $[x',y']$ is a (directed) segment connecting 
		two boundary points $x',y' \in \partial X$.
	\item
		For a Hilbert geometry $(X,d)$ (defined in \S \ref{sec:hilb}), 
		$B(x,r)$ denotes the closed ball of radius $r >0$ centered at $x \in X$ with respect to $d$.
		If we do not wish to specify the center, we simply denote by $B_r$ a closed ball of radius $r$.
		When we consider a Euclidean closed $r$-ball
		 (resp. a closed $r$-ball in a general metric space $(Y,d_Y)$),
		we write it as $B_{euc}(x,r)$ (resp. $B_Y(x,r)$).
	\end{itemize} 
\end{note}

%------------------------------------------
%	2:Preliminaries
%------------------------------------------
\section{Preliminaries}
In this section we recall fundamental facts, especially about Hilbert geometry.
Topics related to coarse geometry are dealt on each occasion
(Definitions \ref{uniformly contractible}, \ref{coarse bounded geometry}, 
\ref{corona}, \ref{asymptotic dimension}).
We refer to a book by Roe \cite{Roe2} or 
the book by Nowak and Yu \cite{Nowak-Yu} for a comprehensive account of these terminologies.

%	2-1
%------------------------------------------
\subsection{Convexity in the Euclidean space}\label{sec:pre}
A metric space $(Y,d_Y)$ is said to be a \emph{geodesic space} (resp. \emph{uniquely geodesic space})
if any two points are joined by a geodesic (resp. a unique geodesic).
Here, a \emph{geodesic} is (the image of) an isometric embedding of a closed interval of $\mathbb{R}$ into $Y$.

For a uniquely geodesic space $(Y,d_Y)$,
we say that a subset $A$ of $Y$ is \emph{convex} (resp. \emph{properly convex}) if for every $x$ and $y$ in $A$,
any point $z$ distinct from $x,y$ on the geodesic joining $x,y$ is contained in $A$ (resp. the interior of $A$).
See \cite[Definitions 2.5.2, 2.5.6]{Papadopoulos}.
If $(Y,d_Y)$ is \emph{proper}, that is, any bounded closed set is compact, 
then the closure of a convex set is also convex \cite[Proposition 2.5.3]{Papadopoulos}. 

\begin{lem}\label{lem:4}
	For a bounded domain $X$ in $\mathbb{R}^n$, 
	the closure $\overline{X}$ is properly convex if and only if 
	$X$ is convex and its boundary $\partial X$ does not include any non-trivial segment.
\end{lem}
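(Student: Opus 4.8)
The plan is to prove the two implications separately, using throughout the elementary fact that for a nonempty bounded open set $X$ one has $X \subseteq \mathrm{int}(\overline{X})$, and hence $\partial(\overline{X}) \subseteq \partial X$, together with the theory of supporting hyperplanes for the convex body $\overline{X}$. Recall also that for a \emph{convex} open set $X$ one has the stronger identity $\mathrm{int}(\overline{X}) = X$, equivalently $\partial(\overline{X}) = \partial X$; this will be available in the direction where convexity of $X$ is hypothesized.

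For the implication ($\Leftarrow$), assume $X$ is convex and $\partial X$ contains no non-trivial segment. Since $X$ is a nonempty bounded open convex set, $\overline{X}$ is a compact convex set with $\mathrm{int}(\overline{X}) = X$ and $\partial(\overline{X}) = \partial X$ (convexity of $\overline{X}$ is the cited Proposition 2.5.3). To verify strict convexity, fix distinct points $x, y \in \overline{X}$ and a point $z = (1-t)x + ty$ with $0 < t < 1$, and suppose $z \notin \mathrm{int}(\overline{X})$, i.e.\ $z \in \partial(\overline{X})$. Choose a supporting hyperplane $H$ of $\overline{X}$ at $z$, writing $H$ as the zero set of an affine functional $f$ with $f \le f(z)$ on $\overline{X}$. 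Then $f(x), f(y) \le f(z)$ while $(1-t)f(x) + t f(y) = f(z)$, so an elementary linearity argument forces $f(x) = f(y) = f(z)$, whence $x, y \in H$ and $[x,y] \subseteq H \cap \overline{X} \subseteq \partial(\overline{X}) = \partial X$. As $x \neq y$, this is a non-trivial segment in $\partial X$, contradicting the hypothesis. Therefore $z \in \mathrm{int}(\overline{X})$, and $\overline{X}$ is strictly convex.

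For the implication ($\Rightarrow$), assume $\overline{X}$ is strictly convex, so in particular convex. To see that $\partial X$ contains no non-trivial segment, suppose $[a,b] \subseteq \partial X$ with $a \neq b$; its midpoint $m$ lies in the open segment between $a, b \in \overline{X}$, so strict convexity gives $m \in \mathrm{int}(\overline{X}) = X$, contradicting $m \in \partial X = \overline{X} \setminus X$. To see that $X$ is convex, take $x, y \in X$ and $z$ in the open segment $(x,y)$; strict convexity gives $z \in \mathrm{int}(\overline{X}) = X$.

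Both parts of the ($\Rightarrow$) direction invoke the identification $\mathrm{int}(\overline{X}) = X$ for the bounded domain $X$, before convexity of $X$ has been established. The supporting-hyperplane step in ($\Leftarrow$) is classical, so I expect this topological identification, namely that $X$ is regular open (equals the interior of its closure) and thus $\partial X = \partial(\overline{X})$, to be the point requiring genuine care; it is precisely what is needed to pass from $z \in \mathrm{int}(\overline{X})$ back to $z \in X$ and to rule out interior boundary points of the domain.
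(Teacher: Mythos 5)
Your $(\Rightarrow)$ half has a genuine gap---one you flag yourself in your closing paragraph but do not close. Both of its steps rest on the identification $\mathrm{int}(\overline{X}) = X$, which is a consequence of convexity of $X$, and convexity of $X$ is one of the two things being proved in that direction; as written, the argument is circular. Moreover, the gap cannot be repaired by a cleverer argument, because for a general bounded domain the implication is false. Take $X = \{\, v \in \mathbb{R}^2 \ \vert\ 0 < \|v\| < 1 \,\}$, the punctured open unit disc: its closure is the closed unit disc, which is strictly convex, yet $X$ is not convex (and here $\mathrm{int}(\overline{X}) \neq X$, while $\partial X$, the circle together with the origin, contains no non-trivial segment). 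Similarly, removing a closed diameter segment from the open disc gives a bounded domain whose closure is strictly convex but whose boundary $\partial X = \overline{X}\setminus X$ does contain a non-trivial segment. The paper dismisses this direction with ``the necessary condition is obvious'', which is legitimate only under the reading that $X$ is already known to be convex; that is the situation in the paper's sole use of the lemma (the proof of Lemma \ref{lem:1}, where $X$ is a bounded convex domain), and in that case $\mathrm{int}(\overline{X}) = X$ is the standard regularity of open convex sets and both of your arguments go through verbatim. So your diagnosis of where the difficulty sits is exactly right, but a complete write-up must either impose convexity (or regular-openness) of $X$ as a standing hypothesis for the forward direction, or observe that the lemma's literal statement is too strong.

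Your $(\Leftarrow)$ half is correct but takes a genuinely different, and heavier, route than the paper. The paper argues elementarily: if some $z \neq x,y$ on a segment $[x,y]$ with endpoints in $\overline{X}$ lay in $\partial X$, then, since $[x,z]$ and $[z,y]$ are non-trivial segments contained in $\overline{X}$ (the closure of a convex set is convex) and neither can lie entirely in $\partial X$ by hypothesis, one picks $x' \in [x,z] \cap X$ and $y' \in [z,y] \cap X$; then $z \in [x',y']$ while $z \notin X$, contradicting convexity of $X$. No supporting hyperplane theorem is needed. Two remarks on your version: the inclusion $H \cap \overline{X} \subseteq \partial(\overline{X})$ deserves a one-line justification (a supporting hyperplane of a convex body with non-empty interior meets the body only in boundary points), and the identity $\partial(\overline{X}) = \partial X$ is more than you need---the trivial inclusion $\partial(\overline{X}) \subseteq \partial X$, valid for any open set, already places your segment $[x,y]$ inside $\partial X$. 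What your approach buys is the reduction to the classical characterization of strictly convex bodies via boundary segments; what the paper's buys is a short, self-contained argument from first principles.
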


\begin{proof}
	The necessary condition is obvious.
	In order to prove the converse, we assume that $[x,y] \cap \partial X$ contains $z \neq x,y$.
	Since $\partial X$ does not include any non-trivial segment, there exist two points
	$x' \in [x,z]$ and $y' \in [z,y]$ with $x',y' \in X$.
	Then $z \in [x',y'] \not\subset X$. 
	This contradicts the convexity of $X$.
\end{proof}

The following is well-known \cite[16.3 Proposition, 16.4 Theorem]{Bredon}.

\begin{prop}\label{prop:homeo}
	Let $A$ be a bounded convex domain in $\mathbb{R}^n$ and let $o \in A$.
	Take $\epsilon > 0$ so that $B_{euc}(o,\epsilon) \subset A$.
	Define a map $\pi : \partial A \to \partial B_{euc}(o,\epsilon)$ 
	as a projection of $\partial A$ to $\partial B_{euc}(o,\epsilon)$ toward $o$.
	Then $\pi$ is a homeomorphism and can be extended to 
	a homeomorphism from $\overline{A}$ to $B_{euc}(o,\epsilon)$.
	In particular $\overline{A}$ is contractible.
\end{prop}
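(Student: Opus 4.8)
The plan is to realize $\pi$ through the radial structure of $\overline{A}$ about the interior point $o$; after a translation we may assume $o$ is the origin. The first key step is to show that every ray emanating from $o$ meets $\partial A$ in exactly one point. Existence follows from boundedness together with connectedness: the ray starts inside the bounded set $A$ and must eventually leave $\overline{A}$, so it crosses $\partial A$. For uniqueness I would invoke the elementary fact that if $o$ lies in the interior of a convex set and $p$ lies in its closure, then the half-open segment $[o,p) \setminus \{p\}$ lies in the interior $A$; hence a ray from $o$ cannot meet $\partial A$ twice. This lets me define a radial function $\rho \colon \partial B_{euc}(o,\epsilon) \to (0,\infty)$, where $\rho(v)$ records how far one must travel from $o$ in the direction of $v$ to reach $\partial A$, so that the boundary point in direction $v$ is $\rho(v)\, v / \epsilon$.

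The second, and main, step is the continuity of $\rho$, which I expect to be the chief obstacle. I would derive it from convexity: if $\rho$ failed to be continuous at some direction $v_0$, one could extract boundary points $p_k \to p_\infty$ along nearby directions with $p_\infty$ lying strictly inside or strictly outside the radial boundary point at $v_0$, and in either case the property that open segments from $o$ stay in $A$ (combined with the convexity of $\overline{A}$) would be violated, giving a contradiction. Once $\rho$ is continuous, the map $\pi$, which sends a boundary point to $\epsilon\, p/|op| \in \partial B_{euc}(o,\epsilon)$, and its candidate inverse $v \mapsto \rho(v)\, v/\epsilon$ are both continuous, and they are mutually inverse by construction. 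Since $\partial B_{euc}(o,\epsilon)$ is compact and $\partial A$ is Hausdorff, $\pi$ is then a homeomorphism.

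The extension to $\overline{A}$ is obtained by linearly rescaling each radial segment of $\overline{A}$ onto the corresponding radius of $B_{euc}(o,\epsilon)$: send $o$ to $o$ and each $p \neq o$ to $\frac{|op|}{\rho(\pi(p))}\,\pi(p)$, where $\pi(p)$ encodes the direction of $p$ seen from $o$. This is a bijection of $\overline{A}$ onto $B_{euc}(o,\epsilon)$ which, once $\rho$ is known to be continuous, is continuous together with its inverse; continuity at $o$ is checked separately, using that $\rho$ is bounded away from $0$ and $\infty$ (the former because $B_{euc}(o,\epsilon)\subset A$, the latter by boundedness of $A$), so that the image tends to $o$ as $p \to o$. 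Finally, $B_{euc}(o,\epsilon)$ is convex, hence contractible onto its center, and therefore $\overline{A}$ is contractible as well.
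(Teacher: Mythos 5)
Your proof is correct, but there is no internal argument in the paper to compare it against: the authors state Proposition \ref{prop:homeo} as well known and simply cite \cite[16.3 Proposition, 16.4 Theorem]{Bredon}. What you have written is the standard radial-projection argument that lies behind that citation, so your proposal serves as a faithful, self-contained replacement for it. Your key steps are all sound: each ray from $o$ meets $\partial A$ exactly once (existence by boundedness and connectedness, uniqueness by the segment lemma that $o \in A$ and $p \in \overline{A}$ imply $[o,p) \subset A$); the radial function $\rho$ is continuous because it is bounded, so if $v_k \to v_0$ then any subsequential limit of the boundary points $\rho(v_k)v_k/\epsilon$ lies both on the ray through $v_0$ and in the closed set $\partial A$ (a point strictly inside the radial boundary point would lie in the open set $A$, a point strictly beyond it would lie outside $\overline{A}$, by the same segment lemma), hence equals $\rho(v_0)v_0/\epsilon$; and radial rescaling then extends $\pi$ to a homeomorphism $\overline{A} \to B_{euc}(o,\epsilon)$, from which contractibility is immediate. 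One streamlining remark: since $\overline{A}$ and $\partial A$ are compact and the targets are Hausdorff, it suffices to verify that your maps are continuous bijections, so continuity of the inverses comes for free; of the two bounds $\epsilon \le \rho \le \mathrm{diam}_{euc}(\overline{A})$ that you invoke, only the lower one is then actually needed, namely for continuity of the forward extension at $o$.
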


%	2-2
%------------------------------------------
\subsection{Hilbert geometry}\label{sec:hilb}
Let $X \subset \mathbb{R}^n$ ($n \ge 2$) be a non-empty bounded convex domain.
For any different two points $x,y \in X$ a line passing through $x$ and $y$ crosses $\partial X$ 
at just two points $x',y'$ where $x',x,y,y'$ are arranged in this order.
Such a chord $[x',y']$ is uniquely determined for $x$ and $y$.
The value 
\[
	\frac{|x y'||y x'|}{|x x'||y y'|}
\]
is called the \emph{cross ratio} of $x$ and $y$.
The cross ratio induces a metric $d$ on $X$ by 
\[
	d(x,y) = 
	\begin{cases}
	\ \log \frac{|x y'||y x'|}{|x x'||y y'|} & \text{\ if\ } x \neq y,\\
	\ 0 & \text{\ if\ } x = y,
	\end{cases}
\]
(for example, \cite{Busemann, de la Harpe}).
We call $d$ the {\em{Hilbert metric}} and $(X,d)$ a {\em{Hilbert geometry}}.

We say that a finite set of points in $\mathbb{R}^n$ is \emph{collinear} if it belongs to a single line.
We always suppose that elements of a collinear set $\{x_1,\ldots,x_k\}$ are arranged by their indices.
The following invariance of the cross ratio under 
the perspective projection is well-known \cite[Proposition 5.6.4]{Papadopoulos}.
See Figure \ref{fig:3}.

\begin{prop}\label{prop:2}		
	Let $\{a_1,a_2,a_3,a_4\}$ and $\{b_1,b_2,b_3,b_4\} \subset \mathbb{R}^n$ be collinear and 
	consist of distinct points, respectively. 
	If we have four lines $R_i$ passing through $a_i$ and $b_i$ $(i=1,\ldots,4)$
	which meet at a point $p \in \mathbb{R}^n$ or are parallel, then 
	\[
		\frac{|a_2a_4||a_3a_1|}{|a_2a_1||a_3a_4|} = \frac{|b_2b_4||b_3b_1|}{|b_2b_1||b_3b_4|}.
	\]
\end{prop}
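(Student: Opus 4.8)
The plan is to show that the displayed cross-ratio expression depends only on the pencil of lines $\{R_1,R_2,R_3,R_4\}$ and not on the particular transversal carrying the four points; the two transversals $\ell_a$ (through the $a_i$) and $\ell_b$ (through the $b_i$) then necessarily return the same value. I treat the concurrent case and the parallel case separately, as the hypothesis of the proposition is itself split this way.

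Suppose first that the $R_i$ meet at a common point $p$. Writing $h$ for the Euclidean distance from $p$ to $\ell_a$, the area of the triangle with vertices $p,a_i,a_j$ computed in two ways gives $|a_i a_j|\, h = |p a_i|\,|p a_j|\,\sin\angle a_i p a_j$. Solving for $|a_i a_j|$ and substituting into
\[
	\frac{|a_2a_4||a_3a_1|}{|a_2a_1||a_3a_4|},
\]
every factor $|p a_i|$ and every power of $h$ cancels, since each index $i\in\{1,2,3,4\}$ appears exactly once in the numerator and once in the denominator. What remains is
\[
	\frac{\sin\angle a_2 p a_4\,\sin\angle a_3 p a_1}{\sin\angle a_2 p a_1\,\sin\angle a_3 p a_4}.
\]
Because $a_i$ and $b_i$ lie on the common line $R_i$ through $p$, the rays $p a_i$ and $p b_i$ have coinciding or opposite directions; in either case $\sin\angle a_i p a_j = \sin\angle b_i p b_j$, as the sine is unchanged under replacing an angle by its supplement. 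Hence the displayed expression is also the cross ratio computed from the $b_i$, and the two agree.

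For the parallel case, let $u$ be the common direction of the $R_i$ and choose a vector $w$ with $w\cdot u = 0$. Since each $R_i$ is a level set of the linear functional $x\mapsto x\cdot w$, the points $a_i$ and $b_i$ share the value $h_i := a_i\cdot w = b_i\cdot w$. Parametrising $\ell_a$ as $a_1 + s\, v_a$ with $v_a$ a unit direction (and noting $v_a\cdot w\neq 0$, for otherwise $\ell_a$ would be parallel to the pencil and could not meet its lines in distinct points), one finds $|a_i a_j| = |h_i - h_j|/|v_a\cdot w|$, and likewise $|b_i b_j| = |h_i - h_j|/|v_b\cdot w|$. The constant $|v_a\cdot w|$ cancels completely in the cross ratio, leaving
\[
	\frac{|h_2-h_4||h_3-h_1|}{|h_2-h_1||h_3-h_4|},
\]
which is manifestly the same for the $a_i$ and the $b_i$.

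The individual computations are elementary; the delicate points are the sign bookkeeping in the concurrent case, namely checking that the sines match regardless of which side of $p$ each point lies on, and excluding in the parallel case the degenerate possibility that a transversal is parallel to the pencil, which is ruled out by the hypothesis that the four points on it are distinct. The conceptual heart of the argument, in both cases, is the reduction of the cross ratio to a quantity intrinsic to the pencil $\{R_i\}$.
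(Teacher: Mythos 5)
The paper offers no proof of Proposition \ref{prop:2} at all: it records the statement as classical and simply cites \cite[Proposition 5.6.4]{Papadopoulos}. Your argument is therefore necessarily a different route, namely a self-contained Euclidean proof, and its strategy is the classical one: express the cross ratio through data intrinsic to the pencil $\{R_i\}$. Your concurrent case is correct: the identity $|a_ia_j|\,h=|pa_i|\,|pa_j|\sin\angle a_ipa_j$ is the two-fold computation of the area of the triangle $pa_ia_j$, the cancellation of the $|pa_i|$ and of $h$ works exactly as you say, and $\sin\angle a_ipa_j=\sin\angle b_ipb_j$ because each ray $pb_i$ equals $\pm$ the ray $pa_i$, so the angle changes at most to its supplement. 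What your proof buys over the paper's citation is a reader-visible argument using nothing beyond plane trigonometry; what it tacitly assumes (see below) is the same non-degeneracy that the proposition itself leaves implicit.

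Two points need repair. First, in the parallel case the justification ``$v_a\cdot w\neq 0$, for otherwise $\ell_a$ would be parallel to the pencil'' is wrong in $\mathbb{R}^n$ for $n\ge 3$ if $w$ is an \emph{arbitrary} vector with $w\cdot u=0$: the hyperplane $w^\perp$ contains many directions transverse to $u$, so $v_a\cdot w=0$ does not force $v_a\parallel u$. (Relatedly, a line in $\mathbb{R}^n$, $n\ge3$, is not a level set of a single linear functional, only contained in one; that part is harmless, since containment gives $a_i\cdot w=b_i\cdot w$.) The fix is one line: since all four parallel lines $R_i$ meet $\ell_a$, the whole configuration lies in the $2$-plane $P$ through $a_1$ spanned by $v_a$ and $u$; in particular the $b_i$, hence $\ell_b$, lie in $P$, and you should choose $w$ to be a nonzero vector of $P$ orthogonal to $u$. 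Within $P$ the orthogonal complement of $w$ is $\mathbb{R}u$, so $v_a\cdot w=0$ or $v_b\cdot w=0$ would indeed force a transversal parallel to the pencil, which the distinctness of the four intersection points rules out. Second, your concurrent case tacitly assumes $h>0$, i.e.\ $p\notin\ell_a$, and likewise $p\notin\ell_b$ so that the angles at $p$ are defined. This is consistent with the proposition's implicit non-degeneracy: if $p$ lay on $\ell_a$, then at least two of the concurrent lines $R_i$ would each contain two distinct points of $\ell_a$ and hence coincide with $\ell_a$, a degenerate configuration (for which the stated equality can in fact fail) that the statement is understood to exclude. A sentence acknowledging both points would make the proof airtight.
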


\vspace{-10pt}
\begin{figure}[htb!]
\centering
\begin{tikzpicture}[scale=.8]
	\coordinate (P) at (0,0);
	
	\coordinate (A1) at (4.2,3);
	\coordinate (A2) at (4.2,1.8);
	\coordinate (A3) at (4.2,.43);
	\coordinate (A4) at (4.2,0);
	
	\coordinate (B1) at (2.8,2);
	\coordinate (B2) at (3.5,1.5);
	\coordinate (B3) at (4.9,0.5);
	\coordinate (B4) at (5.6,0);
	
	\coordinate (R1) at (4.9,3.5);
	\coordinate (R2) at (4.9,2.1);
	\coordinate (R3) at (5.5,.57);
	\coordinate (R4) at (6.2,0);
	
	\filldraw (P) circle[radius=0.5mm] node [anchor=east]{$p$};
	\filldraw (A1) circle[radius=0.5mm] node [anchor=south]{$a_1\ \ $};
	\filldraw (A2) circle[radius=0.5mm] node [anchor=south]{$a_2\ \ \ \ $};
	\filldraw (A3) circle[radius=0.5mm] node [anchor=south]{$a_3\ \ \ \ $};
	\filldraw (A4) circle[radius=0.5mm] node [anchor=north]{$a_4$};
	\filldraw (B1) circle[radius=0.5mm] node [anchor=south]{$b_1\ $};
	\filldraw (B2) circle[radius=0.5mm] node [anchor=north]{$b_2$};
	\filldraw (B3) circle[radius=0.5mm] node [anchor=south]{$b_3$};
	\filldraw (B4) circle[radius=0.5mm] node [anchor=north]{$b_4$};
	\filldraw (R1) node [anchor=west]{$R_1$};
	\filldraw (R2) node [anchor=west]{$R_2$};
	\filldraw (R3) node [anchor=west]{$R_3$};
	\filldraw (R4) node [anchor=west]{$R_4$};
	
	\draw (P) -- (R1);
	\draw (P) -- (R2);
	\draw (P) -- (R3);
	\draw (P) -- (R4);
	\draw [dashed](A1) -- (A4);
	\draw [dashed](B1) -- (B4);
\end{tikzpicture}
\caption{Proposition \ref{prop:2}.}
\label{fig:3}
\end{figure}

We recall some basic facts about Hilbert geometry.

\begin{thm}[\cite{Busemann,de la Harpe}]\label{thm:2}
	Let $X \subset \mathbb{R}^n$ be a non-empty bounded convex domain.
	\begin{itemize}
	\item[$\mathrm{(i)}$] 
		The Hilbert metric and the restricted Euclidean metric give the same topology on $X$.
	\item[$\mathrm{(ii)}$]
		The Hilbert geometry $(X,d)$ is a proper metric space.
	\item[$\mathrm{(iii)}$] 
		Every segment in $X$ is a geodesic in $(X,d)$.
	\item[$\mathrm{(iv)}$]
		The Hilbert geometry $(X,d)$ is uniquely geodesic if and only if there is 
		no pair of non-trivial segments $I,J$ in $\partial X$ 
		such that $I,J$ span an affine plane in $\mathbb{R}^n$.
		In particular if $\overline{X}$ is properly convex in $\mathbb{R}^n$, 
		then $(X,d)$ is uniquely geodesic.
	\end{itemize}
\end{thm}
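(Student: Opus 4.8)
The plan is to take the four assertions in increasing order of difficulty. Parts (i)--(iii) follow from the explicit cross-ratio formula together with the projective invariance of Proposition \ref{prop:2}, while part (iv) requires a genuine analysis of the equality case of the triangle inequality, and is where the real work lies.

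For (i) I would first read off joint continuity of $d$ on $X\times X$ in the Euclidean topology: by boundedness and convexity of $X$ the two intersection points $x',y'$ of the line through distinct $x,y$ with $\partial X$ depend continuously on $(x,y)$, and the cross ratio is a continuous function of $x,y,x',y'$ tending to $1$ as $y\to x$. This gives continuity of the identity map from the Euclidean topology to the Hilbert topology. For the reverse I would establish the basic blow-up estimate that, for a fixed basepoint $o\in X$, one has $d(o,y)\to\infty$ as $y\to\partial X$, since $|yy'|\to 0$ makes the cross ratio diverge. Then, if $d(x_k,x)\to 0$ while $x_k\not\to x$ in the Euclidean metric, I would pass to a subsequence converging Euclideanly to some $z\in\overline X$ and derive a contradiction (via continuity if $z\in X$, via the blow-up estimate if $z\in\partial X$). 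Part (ii) is then immediate: the blow-up estimate shows each closed ball $B(o,r)$ stays at positive Euclidean distance from $\partial X$, hence lies in a Euclidean-compact subset of $X$; being $d$-closed and therefore Euclidean-closed by (i), it is compact.

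For (iii) the point is the multiplicativity of the cross ratio along a single chord. If $z$ lies on $[x,y]$ with $x',x,z,y,y'$ collinear in this order, then all three pairs $(x,z),(z,y),(x,y)$ share the boundary points $x',y'$, and a direct computation telescopes to $d(x,z)+d(z,y)=d(x,y)$. Since $t\mapsto d(x,z_t)$ is continuous and strictly increasing as $z_t$ traverses $[x,y]$, reparametrising by Hilbert arclength exhibits the segment as an isometric embedding, i.e.\ a geodesic.

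The hard part is (iv). I would reduce uniqueness to a characterization of the equality case: by (iii) the straight segment is always a geodesic, so $(X,d)$ fails to be uniquely geodesic exactly when some non-collinear triple $x,z,y$ (with $z$ a point off $[x,y]$ lying on a second geodesic from $x$ to $y$) satisfies $d(x,z)+d(z,y)=d(x,y)$. Applying Proposition \ref{prop:2} to project the three relevant chords into the affine plane $\mathrm{aff}(x,y,z)$, I would write the defect $d(x,z)+d(z,y)-d(x,y)\ge 0$ purely in terms of the boundary points of these chords and identify its vanishing with a degeneracy in which certain boundary points become collinear; chasing this degeneracy shows that $\partial X$ must then contain two non-trivial segments $I,J$ spanning $\mathrm{aff}(x,y,z)$, giving the ``only if'' direction. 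For the converse, given such $I,J$ spanning a plane $P$, I would note that the Hilbert metric of $X$ restricts on $P\cap X$ to the Hilbert metric of the planar domain $P\cap X$, reducing matters to the two-dimensional case, where two coplanar boundary segments let one build explicitly a whole family of distinct geodesics between suitable points (the triangle, whose Hilbert geometry is a polygonally normed plane, being the model case). The final clause follows from Lemma \ref{lem:4}, as strict convexity of $\overline X$ forbids any segment in $\partial X$. The step I expect to be the main obstacle is the equality-case analysis: pinning down, uniformly in the dimension $n$, exactly which collinearity of boundary points the identity $d(x,z)+d(z,y)=d(x,y)$ encodes, and converting it into the coplanar-segments condition.
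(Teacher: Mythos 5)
A preliminary remark on the comparison itself: the paper does \emph{not} prove Theorem \ref{thm:2}; it quotes it from \cite{Busemann,de la Harpe}. So your proposal can only be measured against the classical arguments, not against an internal proof. For parts (i)--(iii) your plan is exactly the classical one and is sound: continuity of the chord-endpoint map together with the divergence of the cross ratio as a point approaches $\partial X$ gives the equivalence of the topologies; the same divergence shows that a closed $d$-ball keeps a positive Euclidean distance from $\partial X$, hence sits inside a Euclidean-compact subset of $X$, which gives properness; and for (iii) the three pairs $(x,z)$, $(z,y)$, $(x,y)$ share the same chord endpoints $x',y'$, so the cross ratios multiply, the distances add, and reparametrisation by arclength is routine.

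Part (iv) is where your proposal is incomplete, and in one direction it would fail as written. The equality analysis you defer is the whole content, and it has a concrete form: label the chords so that $a,x,z,b$, then $c,z,y,d$, then $e,x,y,f$ are in order, let $p$ be the intersection of the line through $a,c$ with the line through $x,y$, and $q$ the intersection of the line through $b,d$ with the line through $x,y$. Applying Proposition \ref{prop:2} twice (projecting the line through $x,z$ from the centre $c$, and the line through $z,y$ from the centre $b$, onto the line through $x,y$), the auxiliary point coming from the line through $b,c$ cancels and one gets $d(x,z)+d(z,y)=\log\frac{|xq|\,|yp|}{|xp|\,|yq|}$. One checks that $p$ lies on $[a,c]$, hence in $\overline{X}$, on the $e$-side of $x$, and that $q$ lies on $[b,d]$, on the $f$-side of $y$; monotonicity as in Remark \ref{rem:2} then gives the triangle inequality with equality if and only if $p=e$ and $q=f$, equivalently $[a,c]\subset\partial X$ and $[b,d]\subset\partial X$ (a segment in $\overline{X}$ with an interior point on $\partial X$ lies entirely in $\partial X$). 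These two segments are non-trivial, non-collinear, and lie in the plane of $x,y,z$, which settles your ``only if'' direction. Your ``if'' direction, however, has a genuine gap: you restrict the metric to $P\cap X$, where $P$ is the plane spanned by $I,J$, without knowing that $P$ meets $X$ at all. This gap cannot be closed, because the statement as quoted is false when $P$ misses $X$: for the open half-ball $X=\{u\in\mathbb{R}^3 : \|u\|<1,\ u_3>0\}$, two crossing chords of the flat disk in $\partial X$ are non-trivial segments spanning the plane $\{u_3=0\}$, yet every plane that does meet $X$ intersects $\partial X$ in at most one segment (the rest lies on the sphere), so by the equality criterion above no non-collinear triple realises equality and $(X,d)$ \emph{is} uniquely geodesic. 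The correct criterion---and exactly what your restriction argument proves---requires $I,J$ to span a plane \emph{that intersects} $X$; this proviso is vacuous when $n=2$ and in the ``only if'' direction (there the plane contains $x,y,z$), and the final clause of (iv), via Lemma \ref{lem:4}, is unaffected.
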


%------------------------------------------
%	3:Uniform contractibility
%------------------------------------------
\section{Uniform contractibility}
We prove that every Hilbert geometry is uniformly contractible.

\begin{dfn}\label{uniformly contractible}
	A metric space $(Y,d_Y)$ is {\em{uniformly contractible}} if for any $R > 0$
	there exists $S \ge R$ such that any closed $R$-ball $B_Y(y,R)$ is contractible to a point in $B_Y(y,S)$.
\end{dfn}

\begin{lem}\label{lem:9}
	Let $(X,d)$ be a Hilbert geometry.
	Then for any $o,x,y \in X$ and any $z \in [x,y]$, we have 
	\[
		d(o,z) \le \max\left\{ d(x,o), d(y,o) \right\}.
	\]
	In particular, every open ball in $(X,d)$ is convex with respect to the Euclidean metric.
\end{lem}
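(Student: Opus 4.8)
The plan is to establish directly that every closed metric ball $B(o,r)$ is convex in the Euclidean sense; the asserted inequality then follows at once, since for $z \in [x,y]$ and $r := \max\{d(x,o),d(y,o)\}$ both $x$ and $y$ lie in $B(o,r)$, and convexity forces $z \in B(o,r)$, i.e.\ $d(o,z)\le r$.

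First I would describe $B(o,r)$ radially from its center. Fix $o \in X$ and, for a unit vector $u$, let $\rho(u) > 0$ be the Euclidean distance from $o$ to $\partial X$ along the ray $o + tu$, so that $\overline{X} = \{\,o + tu : 0 \le t \le \rho(u)\,\}$ as $u$ ranges over unit vectors. Writing a point of the chord through $o$ in direction $u$ as $z = o + su$ and reading off its two boundary endpoints $o + \rho(u)u$ and $o - \rho(-u)u$, the definition of the Hilbert metric gives
\[
	d(o,\,o+su) = \log\frac{\rho(u)\,(s + \rho(-u))}{\rho(-u)\,(\rho(u) - s)}.
\]
This is strictly increasing in $s$ on $[0,\rho(u))$, so $B(o,r)$ is exactly the radial region $\{\,o + su : 0 \le s \le s_r(u)\,\}$, where solving $d(o,o+su)=r$ yields
\[
	s_r(u) = \frac{\rho(u)\,\rho(-u)\,(e^r-1)}{\rho(u) + e^r\rho(-u)}.
\]

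The key step is to recognize this body as the unit sublevel set of a convex gauge. Let $g$ be the Minkowski gauge of the convex body $\overline{X} - o$, so $g$ is convex and positively homogeneous of degree one with $g(u) = 1/\rho(u)$ for unit $u$; the map $v \mapsto g(-v)$, being the gauge of the reflected body $-(\overline{X}-o)$, is convex for the same reason. A direct rearrangement of the displayed formula gives $1/s_r(u) = (e^r-1)^{-1}\bigl(e^r g(u) + g(-u)\bigr)$, so that, setting
\[
	G_r(v) := \frac{1}{e^r - 1}\bigl(e^r\, g(v) + g(-v)\bigr),
\]
which is positively homogeneous of degree one, we obtain $B(o,r) = o + \{\,v : G_r(v) \le 1\,\}$. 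Since $e^r - 1 > 0$, the function $G_r$ is a nonnegative combination of the convex functions $g(\cdot)$ and $g(-\cdot)$, hence convex; therefore its sublevel set is convex, and so is $B(o,r)$. Running the same argument with a strict inequality shows open balls are convex, which gives the ``in particular'' clause.

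I expect the only real obstacle to lie in the middle step: one must check that $d(o,\,o+su)$ is genuinely monotone in $s$ (so that the ball is the full radial region up to $s_r(u)$, with no gaps), and carry out carefully the algebra turning $1/s_r(u)$ into a positive combination of $g(u)$ and $g(-u)$, while noting that properness and boundedness of $X$ keep $\rho(\pm u)$ finite and positive in every direction. It is worth emphasizing that no reduction to the plane and no appeal to Proposition \ref{prop:2} is required: the computation is performed chord by chord and is insensitive to the ambient dimension.
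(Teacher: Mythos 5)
Your proof is correct, but it takes a genuinely different route from the paper's. The paper argues synthetically: it reduces to the plane spanned by $o,x,y$, draws the chords through $o,x$ and $o,y$, and uses projective invariance of the cross ratio (Proposition \ref{prop:2}) together with the elementary monotonicity of Remark \ref{rem:2} to compare $d(o,z)$ with $\max\{d(o,x),d(o,y)\}$ directly, obtaining the inequality first and convexity of balls as a corollary. You invert this logic: you prove convexity of the closed ball $B(o,r)$ outright by exhibiting it as $o+\{\,v : G_r(v)\le 1\,\}$ with $G_r(v)=(e^r-1)^{-1}\bigl(e^r g(v)+g(-v)\bigr)$, where $g$ is the Minkowski gauge of $\overline{X}-o$. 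Your intermediate steps all check out: the radial formula $d(o,o+su)=\log\frac{\rho(u)(s+\rho(-u))}{\rho(-u)(\rho(u)-s)}$ follows from the definition of the Hilbert metric with the chord endpoints $o+\rho(u)u$ and $o-\rho(-u)u$; it is strictly increasing in $s$ since the numerator increases and the denominator decreases; the algebra giving $1/s_r(u)=(e^r-1)^{-1}\bigl(e^r g(u)+g(-u)\bigr)$ is right; and $g(\cdot)$, $g(-\cdot)$ are convex as gauges of the convex bodies $\overline{X}-o$ and $-(\overline{X}-o)$, so $G_r$ is convex and its sublevel set is convex. What your computation buys is an explicit, quantitative description of Hilbert balls as sublevel sets of a convex gauge --- strictly more information than the bare inequality --- valid chord by chord in every dimension, with no planar reduction and no appeal to Proposition \ref{prop:2}. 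What the paper's argument buys is consistency with its toolkit: the same cross-ratio/perspectivity technique recurs in Lemmas \ref{lem:5}, \ref{lem:10} and \ref{lem:6}, so the synthetic proof fits the ambient style and needs no auxiliary machinery. One small bookkeeping point in your write-up: when identifying $B(o,r)$ with the radial region and with the sublevel set, you implicitly use $s_r(u)<\rho(u)$ (so the region stays inside $X$ and the monotonicity argument applies on all of it); this is immediate from the formula for $s_r(u)$, but it deserves a sentence.
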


\begin{proof}
	If $o,x,y$ are collinear then the claim is trivial and hence
	it suffices to consider the case where $o,x,y$ span a plane $H$.
	Let $[x_1,x_2]$ and $[y_1,y_2]$ be two chords through $o,x$ and $o,y$ in this order respectively.
	Then $[x_1,x_2]$ and $[y_1,y_2]$ belong to the plane $H$.
	For any $z \in [x,y]$ we take a chord $[z_1,z_2]$ passing through $o,z$ in this order.
	Since $\overline{X}$ is also convex, 
	$[z_1,z_2]$ intersects $[x_1,y_1]$ and $[x_2,y_2]$ at two points $z'_1, z'_2 \in H \cap \overline{X}$ respectively.
	If $[x_1,y_1]$ and $[x_2,y_2]$ are not parallel, 
	let $p \in H$ be the point on which the line including $[x_1,y_1]$ intersects 
	the line including $[x_2,y_2]$.
	In the case where $[x_1,y_1]$ and $[x_2,y_2]$ are parallel, we take the point at infinity as $p$.
	Considering the line $L$ through $p,z$, we see that 
	$L$ crosses $[x_1,x_2]$ and $[y_1,y_2]$ at $x_3$ and $y_3$ respectively.
	Since $z$ belongs to the segment $[x,y]$, 
	(i) $|ox| \ge |ox_3|$ and $|o y_3|\ge|o y|$ or 
	(ii) $|o y| \ge |o y_3|$ and $|ox_3|\ge|ox|$ must be satisfied.
	If (i) happens, by Proposition \ref{prop:2} we have 
	\[
		\frac{|ox_2||xx_1|}{|ox_1||xx_2|} \ge \frac{|ox_2||x_3x_1|}{|ox_1||x_3x_2|} 
									     =   \frac{|oz'_2||z z'_1|}{|oz'_1||z z'_2|}
									     \ge \frac{|oz_2||z z_1|}{|oz_1||z z_2|}.
	\]
	Thus we conclude that $d(o,x) \ge d(o,z)$.
	A similar inequality gives $d(o,y) \ge d(o,z)$ in the case where (ii) happens.
\end{proof}

\begin{rem}\label{rem:2}
	For $A > 0, B \ge A$ and $C \ge 0$, we have $B/A \ge (B+C)/(A+C)$.
\end{rem} 

\begin{prop}\label{prop:3}
	Every Hilbert geometry $(X,d)$ is uniformly contractible.
\end{prop}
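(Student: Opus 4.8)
The plan is to exploit the Euclidean convexity of Hilbert balls established in Lemma~\ref{lem:9} and to contract each ball to its center along Euclidean segments. The key observation is that this contraction can be carried out \emph{inside the ball itself}, so that the constant $S$ in the definition of uniform contractibility may simply be chosen equal to $R$.

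First I would upgrade the statement of Lemma~\ref{lem:9} from open balls to closed balls. Given a closed ball $B(x,R)$ and a point $p \in B(x,R)$, I apply the inequality of Lemma~\ref{lem:9} with $o = x$ to the two endpoints $p$ and $x$ of the segment $[p,x]$: for every $z \in [p,x]$ it gives $d(x,z) \le \max\{ d(p,x), d(x,x) \} = d(p,x) \le R$. Hence $[p,x] \subset B(x,R)$, and in particular the closed ball $B(x,R)$ is convex with respect to the Euclidean metric.

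Next I would define the straight-line homotopy toward the center, namely
\[
	H \colon B(x,R) \times [0,1] \to \mathbb{R}^n, \qquad H(p,t) = (1-t)p + t x.
\]
By the previous step each segment $[p,x]$ lies in $B(x,R)$, so $H$ in fact takes values in $B(x,R)$, and it satisfies $H(\cdot,0) = \mathrm{id}$ together with $H(\cdot,1) \equiv x$. Since $H$ is manifestly continuous for the Euclidean topology and, by Theorem~\ref{thm:2}(i), the Hilbert and Euclidean topologies coincide on $X$ and hence on the subspace $B(x,R)$, the map $H$ is a continuous homotopy in $(X,d)$. Therefore $B(x,R)$ contracts to the point $x$ within itself, and taking $S = R$ verifies the definition of uniform contractibility.

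I do not expect a serious obstacle here, since Lemma~\ref{lem:9} already does the real work. The one point demanding care is that continuity of the contraction must be checked for the Hilbert metric rather than the Euclidean one, and this is precisely where Theorem~\ref{thm:2}(i) is invoked. Note also that this direct straight-line contraction already shows that each ball is contractible, so no appeal to Proposition~\ref{prop:homeo} is needed.
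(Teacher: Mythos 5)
Your proof is correct and follows essentially the same route as the paper: both rest on Lemma~\ref{lem:9} to get Euclidean convexity (star-shapedness suffices) of Hilbert balls and on Theorem~\ref{thm:2}(i) to transfer continuity, concluding that each closed ball contracts inside itself, so $S=R$. The only difference is cosmetic: where the paper cites Proposition~\ref{prop:homeo} to pass from convexity to contractibility, you write out the straight-line homotopy $H(p,t)=(1-t)p+tx$ explicitly, which is a perfectly valid (and slightly more self-contained) way to finish.
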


\begin{proof}
	Every open ball of $(X,d)$ is convex with respect to the Euclidean metric by Lemma \ref{lem:9}.
	Thus by Theorem \ref{thm:2} (i) and Proposition \ref{prop:homeo}, any closed ball with respect to $d$ 
	is contractible in itself.
\end{proof}

%------------------------------------------
%	4:coarse bounded geometry
%------------------------------------------
\section{Coarse bounded geometry}
We show that Hilbert geometries have coarse bounded geometry.

\begin{dfn}\label{coarse bounded geometry}
	A metric space $(Y,d_Y)$ is said to be with {\em{coarse bounded geometry}} if there exists 
	$\epsilon > 0$ satisfying the following: For any $R > 0$,
	\[
		\sup\left\{\ l\ \vert\ y \in Y,\ y_1,\dots,y_l \in B_Y(y,R),\ i \neq j,\ d_Y(y_i,y_j) > \epsilon\ \right\} 
		< \infty.
	\]
\end{dfn}

In order to count points in a closed ball we compute the ratio of the volume of closed balls.

\begin{lem}\label{lem:5}
	Let $(X,d)$ be a Hilbert geometry.
	For any $R, r > 0$ $(R \ge r)$,  
	let $B_R$ and $B(x,r)$ be closed balls with $B(x,r) \subset B_R$.
	Then there exists a constant $1> D > 0$ such that 
	the map $f_x: \mathbb{R}^n \to \mathbb{R}^n$ given by 
	\[
		f_x(y) = x+D(y-x)
	\]
	sends $B_R$ into $B(x,r)$.
\end{lem}

\begin{proof}
	Fixing $y \in B_R \setminus B(x,r)$ arbitrarily,
	we take a chord $[x', y']$ passing through $x,y$ in this order.
	Let $z \in \partial B(x,r)$ be the intersection point of the segment $[x,y]$ and $\partial B(x,r)$.
	See Figure \ref{fig:1}.
	
\begin{figure}[htb!]
\begin{center}
\begin{tikzpicture}[scale=.8]
	\coordinate (A) at (0,0);
	\coordinate (A') at (.2,-.05);
	\coordinate (A") at (.5,-.07);
	\coordinate (X) at (1.05,-.16);
	\coordinate (Z) at (2.1,.73);
	\coordinate (ETA) at (-.5,1.97);
	\coordinate (XI) at (2.,-1.5);

	\coordinate (SP) at (-2.9,1.4);
	\coordinate (BR) at (-2.2,.45);
	\coordinate (Br) at (-1.5,0.);

	\filldraw (X) circle[radius=0.5mm] node [anchor=south]{$\ \ x$};	
	\filldraw (ETA) circle[radius=0.5mm] node [anchor=south]{$y'\ \,$};
	\filldraw (XI) circle[radius=0.5mm] node [anchor=north]{$x'$};

	\draw [line width=.6pt, name path=1] (A') ellipse (2.55cm and 1.6cm);
	\draw [line width=.5pt, name path=2] (A") ellipse (2cm and 1cm);
	\draw [line width=.6pt, name path=3, color=white] (A') ellipse (2.5cm and 1.3cm);
	\draw [line width=.5pt, name path=0] (XI) -- (ETA);
	\draw [line width=.7pt] (A) ellipse (3cm and 2cm);

	\draw (SP) node [anchor=west]{$X$};
	\draw (BR) node [anchor=west]{\small{$B_R$}};
	\draw (Br) node [anchor=west]{\small{$B(x,r)$}};
	
	\filldraw [name intersections={of=0 and 3, by=Y}] (Y) circle[radius=0.5mm] node [anchor=east]{$y$};
	\filldraw [name intersections={of=0 and 2, by=Z}] (Z) circle[radius=0.5mm] node [anchor=north]{$z\ \ $};
\end{tikzpicture}
\end{center}
\caption{Proof of Lemma \ref{lem:5}.}
\label{fig:1}
\end{figure}

	Since $x, y \in B_R$, their distance is at most $2R$:
	\begin{equation}\label{eq:2-1}
		\frac{|y' x||x' y|}{|y' y||x' x|}= e^{d(x,y)} \le e^{2R}.	
	\end{equation}
	By expanding \eqref{eq:2-1} with $|y' x| = |y' y| + |x y|$, $|x' y| = |x' x| + |x y|$
	and $|y' x'|=|y' y|+|y x|+|x x'|$, we obtain
	\[
		|x y||y' x'| \le \left(e^{2R}-1\right)|y' y||x' x|.
	\]
	In a similar way,
	\[
		|x z||y' x'| = \left(e^{r}-1\right)|y'z||x' x|.
	\]
	Since $|y' y| \le |y' z|$ by our assumption,
	we get
	\[
		\frac{|x z|}{|x y|}
		\ge \frac{\left(e^{r}-1\right)|y'z||x' x|}{\left(e^{2R}-1\right)|y' y||x' x|}		 
		\ge \frac{e^{r}-1}{e^{2R}-1}.
	\]
	Defining $D = (e^{r}-1)/(e^{2R}-1)$, we have the conclusion.
\end{proof}

\begin{prop}\label{prop:1}
	Every Hilbert geometry is with coarse bounded geometry.
\end{prop}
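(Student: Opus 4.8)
The plan is to convert the point-counting in the definition of bounded coarse geometry into a Euclidean volume estimate, using Lemma \ref{lem:5} as the engine, via the standard packing argument. I would fix \emph{any} $\epsilon > 0$ once and for all and argue that it witnesses bounded coarse geometry. So fix $R > 0$, a center $y \in X$, and points $y_1,\dots,y_l \in B(y,R)$ that are pairwise more than $\epsilon$ apart, i.e.\ $d(y_i,y_j) > \epsilon$ for $i \neq j$. The goal is a bound on $l$ depending only on $R$ and $\epsilon$.

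First I would record two elementary geometric facts. The closed balls $B(y_i,\epsilon/2)$ are pairwise disjoint: a common point $z$ would force $d(y_i,y_j) \le d(y_i,z) + d(z,y_j) \le \epsilon$, contradicting the separation. Moreover, since $d(y,y_i) \le R$, each satisfies $B(y_i,\epsilon/2) \subset B(y, R+\epsilon/2)$ by the triangle inequality. Thus $\bigsqcup_{i=1}^{l} B(y_i,\epsilon/2)$ is a disjoint family of convex bodies (convexity coming from Lemma \ref{lem:9}), each of positive finite Euclidean volume because the Hilbert and Euclidean topologies agree (Theorem \ref{thm:2}(i)), all packed inside the single ball $B(y, R+\epsilon/2)$.

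Next I would feed this into Lemma \ref{lem:5}, applied with center $x = y_i$, inner radius $\epsilon/2$, and outer ball $B(y, R+\epsilon/2)$ of radius $R+\epsilon/2$; the inclusion $B(y_i,\epsilon/2) \subset B(y, R+\epsilon/2)$ is exactly the hypothesis needed. The lemma then supplies the homothety $f_{y_i}(w) = y_i + D(w - y_i)$, with $D = (e^{\epsilon/2}-1)/(e^{2R+\epsilon}-1)$, carrying $B(y, R+\epsilon/2)$ into $B(y_i,\epsilon/2)$. Since a homothety of ratio $D$ scales $n$-dimensional Lebesgue measure by $D^n$, this gives $\mathrm{vol}_{euc}(B(y_i,\epsilon/2)) \ge D^n \, \mathrm{vol}_{euc}(B(y, R+\epsilon/2))$. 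Summing over the disjoint family inside $B(y, R+\epsilon/2)$ then yields
\[
	\mathrm{vol}_{euc}\big(B(y, R+\epsilon/2)\big)
	\ge \sum_{i=1}^{l} \mathrm{vol}_{euc}\big(B(y_i,\epsilon/2)\big)
	\ge l\, D^n\, \mathrm{vol}_{euc}\big(B(y, R+\epsilon/2)\big),
\]
so that $l \le D^{-n} = \big((e^{2R+\epsilon}-1)/(e^{\epsilon/2}-1)\big)^n$.

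The conclusion is immediate once this bound is established: $D^{-n}$ depends only on $R$ and $\epsilon$, never on the center $y$ or on the configuration $\{y_i\}$, so for each $R$ the supremum in the definition is at most $D^{-n} < \infty$, and the fixed $\epsilon$ works uniformly. The argument is essentially forced given Lemma \ref{lem:5}; I do not expect a serious obstacle, only the bookkeeping of radii. The one point demanding care is that the outer reference ball must have radius $R + \epsilon/2$ rather than $R$, so that the $\epsilon/2$-balls genuinely sit inside it, and correspondingly that $D$ is computed with $e^{2(R+\epsilon/2)}$. The crucial structural feature making everything go through is that the contraction ratio furnished by Lemma \ref{lem:5} is independent of where the inner ball is centered, which is precisely what renders the volume comparison uniform over all $\epsilon$-separated sets.
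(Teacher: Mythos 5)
Your proposal is correct and is essentially the paper's own proof: the same packing argument via disjoint small balls inside an enlarged ball, with Lemma \ref{lem:5} supplying a center-independent homothety ratio and Lebesgue measure giving the count $l \le D^{-n}$. The only difference is cosmetic bookkeeping (you separate points by $\epsilon$ and pack $\epsilon/2$-balls, while the paper separates by $2\epsilon$ and packs $\epsilon$-balls).
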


\begin{proof}
	Fix $\epsilon > 0$ arbitrarily, let $R > 0$ and take any closed ball $B(x,R)$.
	Choose $\{x_1,\ldots,x_l\} \subset B(x,R)$ so that 
	$d(x_i,x_j) > 2\epsilon$ ($i \neq j$) for some $l \in \mathbb{N}$.
	This condition is equivalent to $B(x_i,\epsilon) \cap B(x_j,\epsilon) = \emptyset$ for each $i \neq j$.
	
	Since $B(x_i,\epsilon) \subset B(x,R+\epsilon)$, 
	there exists a contracting constant $D$ and a map $f_i : x \mapsto x_i + D(x - x_i)$ 
	such that $f_i(B(x,R+\epsilon)) \subset B(x_i,\epsilon)$ for each $i \in \{1,\ldots,l\ \}$ by Lemma \ref{lem:5}.
	Note that $D$ depends only on $R$ and $\epsilon$.
	We denote by $\mu$ the Lebesgue measure on $\mathbb{R}^n$.
	Then we have $\mu(f_i(B(x,R+\epsilon))) = D^n\mu(B(x,R+\epsilon))$ for all $i$.
	Since $\bigsqcup_{i=1}^l B(x_i,\epsilon) \subset B(x,R+\epsilon)$, 
	\begin{equation}\label{eq:2-2}
		\sum_{i = 1}^l \mu(B(x_i,\epsilon))
			 = \mu\left(\bigsqcup_{i=1}^l B(x_i,\epsilon)\right) \le \mu(B(x,R+\epsilon)).
	\end{equation}
	On the other hand, we also have
	\begin{equation}\label{eq:2-3}
			\sum_{i = 1}^l \mu(B(x_i,\epsilon)) 
			\ge \sum_{i=1}^l \mu(f_i(B(x,R+\epsilon))) 
			= l D^n\mu(B(x,R+\epsilon)).
	\end{equation}
	Combining \eqref{eq:2-2} and \eqref{eq:2-3}, we obtain
	\[
		l \le \frac{1}{D^n}.
	\]
	Thus we conclude
	\[
		\sup\left\{\ 
		l\ \left\vert\ x \in X,\ x_1,\ldots,x_l \in B(x,R),\ i\neq j,\ d(x_i,x_j) > 2\epsilon\ 
		\right.\right\}
		\ \le\ \frac{1}{D^n}.
	\]
	This completes the proof.
\end{proof}

%------------------------------------------
%	5:Corona
%------------------------------------------
\section{Corona}
The natural boundary of a bounded convex domain $X$ gives a compactification of the Hilbert geometry $(X,d)$.
This is a consequence of Theorem \ref{thm:2} (i).

\begin{cor}\label{cor:closure}
	Let $(X,d)$ be a Hilbert geometry.
	Then the closure $\overline{X}$ of $X$ in $\mathbb{R}^n$ gives a compactification of $(X,d)$.
\end{cor}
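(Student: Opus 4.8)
The plan is to verify the three defining features of a compactification for the inclusion $\iota \colon X \hookrightarrow \overline{X}$, where $\overline{X}$ is regarded as a subspace of $\mathbb{R}^n$: namely that $\overline{X}$ is compact and Hausdorff, that $\iota(X)$ is dense in $\overline{X}$, and that $\iota$ is a topological embedding of the metric space $(X,d)$. Once these are established the conclusion is immediate from the definition of a compactification.

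First I would dispose of the compactness, Hausdorff, and density conditions, all of which are inherited directly from the ambient Euclidean structure and require nothing about the Hilbert metric. Since $X$ is a bounded subset of $\mathbb{R}^n$, its closure $\overline{X}$ is closed and bounded, hence compact by the Heine--Borel theorem; and as a subspace of the metric space $\mathbb{R}^n$ it is automatically Hausdorff. Density is equally immediate, since by definition $\overline{X}$ is the closure of $X$ in $\mathbb{R}^n$, so $X$ is dense in $\overline{X}$.

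The only point that uses the specific structure of the Hilbert geometry is that $\iota$ be a homeomorphism onto its image. The image $\iota(X) = X$ carries the subspace topology inherited from $\overline{X} \subset \mathbb{R}^n$, which is precisely the restricted Euclidean topology on $X$. By Theorem \ref{thm:2} (i), this restricted Euclidean topology coincides with the topology induced by the Hilbert metric $d$. Hence $\iota$ is a homeomorphism from $(X,d)$ onto the dense subspace $X$ of the compact Hausdorff space $\overline{X}$, which is exactly the assertion that $\overline{X}$ is a compactification of $(X,d)$.

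I do not anticipate a genuine obstacle: the entire content of the corollary is carried by Theorem \ref{thm:2} (i), while the remaining verifications are standard point-set topology. The only subtlety worth keeping in mind is to treat the two \emph{a priori} distinct topologies on $X$ — the Euclidean subspace topology and the Hilbert-metric topology — as separate until the appeal to Theorem \ref{thm:2} (i) identifies them, so that the embedding claim is not silently assumed.
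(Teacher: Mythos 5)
Your proposal is correct and takes the same route as the paper: the paper disposes of this corollary with the single remark that it is ``a consequence of Theorem \ref{thm:2} (i),'' which is exactly the identification of the Hilbert-metric topology with the restricted Euclidean topology that carries all the weight in your argument. The remaining verifications you spell out (Heine--Borel compactness, Hausdorffness, and density of $X$ in its closure) are the standard point-set facts the paper leaves implicit.
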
	

We discuss when the natural boundary of a Hilbert geometry is a corona.
Let $(Y,d_Y)$ be a proper metric space.
A bounded continuous function $f:Y \to \mathbb{C}$ is a \emph{Higson function} on $Y$
if for any $\epsilon>0$ and any $C > 0$, there exists a bounded set $B \subset Y$
such that for $x,y\in Y$ with $d_Y(x,y) \le C$ and $x \not\in B$ we have $\left|f(x)-f(y)\right|<\epsilon$.

\begin{dfn}\label{corona}
	A metrizable compactification $\overline{Y}$ of a proper metric space $Y$ is a \emph{coarse compactification}
	if the restriction of every continuous function on $\overline{Y}$ is a Higson function on $Y$.
	We call the boundary $\overline{Y}\setminus Y$ a \emph{corona} of $Y$.
\end{dfn}

The following is self-evident by the definition of the Hilbert metric.

\begin{lem}\label{lem:bounded}
	Let $(X,d)$ be a Hilbert geometry.
	If a subset $A \subset X$ satisfies $d_{euc}(A,\partial X) > \epsilon$ for some $\epsilon > 0$ then it is bounded, 
	i.e., the diameter of $A$ with respect to $d$ is finite.
\end{lem}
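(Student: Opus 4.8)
The plan is to estimate the Hilbert distance between two arbitrary points of $A$ directly from the cross-ratio formula, using the Euclidean boundedness of $X$ for the upper bounds and the hypothesis $d_{euc}(A,\partial X)>\epsilon$ for the lower bounds. Since $X$ is a bounded domain, its Euclidean diameter $M := \mathrm{diam}_{euc}(\overline{X})$ is finite; this single constant together with $\epsilon$ should yield a uniform bound on $d(x,y)$ for all $x,y \in A$.

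First I would fix two distinct points $x,y \in A$ and take the chord $[x',y']$ through $x,y$ with the boundary points arranged in the order $x',x,y,y'$, so that
\[
	d(x,y) = \log \frac{|x y'||y x'|}{|x x'||y y'|}.
\]
The two numerator lengths $|x y'|$ and $|y x'|$ are distances between points of $\overline{X}$, hence each is bounded above by $M$. For the denominator, note that $x' , y' \in \partial X$ while $x,y \in A$; the hypothesis $d_{euc}(A,\partial X)>\epsilon$ then gives $|x x'| \ge d_{euc}(x,\partial X) > \epsilon$ and likewise $|y y'| > \epsilon$.

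Combining these estimates yields
\[
	d(x,y) \le \log \frac{M \cdot M}{\epsilon \cdot \epsilon} = 2\log\!\left(\frac{M}{\epsilon}\right),
\]
a bound independent of the chosen pair, so $\mathrm{diam}_d(A) \le 2\log(M/\epsilon) < \infty$. I do not expect any genuine obstacle here, in keeping with the statement that the lemma is self-evident from the definition of the Hilbert metric. The only point requiring a little care is the bookkeeping of which of the four segment lengths are controlled from above and which from below: the two lengths measuring the distance from $x$ (resp.\ $y$) to the \emph{nearer} boundary point $x'$ (resp.\ $y'$) are exactly those bounded below by $\epsilon$, while the two \emph{crossing} lengths $|xy'|$ and $|yx'|$ are the ones bounded above by the Euclidean diameter.
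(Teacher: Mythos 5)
Your proof is correct, and it is precisely the argument the paper treats as implicit: the paper gives no written proof, declaring the lemma ``self-evident by the definition of the Hilbert metric,'' and your direct cross-ratio estimate $d(x,y)\le \log\bigl(M^2/\epsilon^2\bigr)$ with $M=\mathrm{diam}_{euc}(\overline{X})$ is exactly that self-evident computation spelled out. The bookkeeping of which lengths are bounded above by $M$ and which below by $\epsilon$ is handled correctly.
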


\begin{lem}\label{lem:3}
	For two converging sequences $x_i \rightarrow x$ and $y_i \rightarrow y$ in $\mathbb{R}^n$,
	any sequence $\{z_i\}_i$ consisting of $z_i \in [x_i,y_i]$ $(i \in \mathbb{N})$ has a converging subsequence.
	Moreover the limit point lies in $[x,y]$.
\end{lem}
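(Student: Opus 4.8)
The statement is a compactness-type claim about segments whose endpoints converge, so the plan is to exploit the fact that closed bounded sets in $\mathbb{R}^n$ are compact (Bolzano--Weierstrass) together with the simple algebraic description of a point on a segment. First I would write each $z_i$ as a convex combination of its endpoints, namely $z_i = t_i x_i + (1-t_i) y_i$ for some $t_i \in [0,1]$. This is legitimate because $z_i \in [x_i,y_i]$ means precisely that $z_i$ lies on the segment, i.e. is such a convex combination. The sequence $\{t_i\}_i$ lives in the compact interval $[0,1]$, so by the Bolzano--Weierstrass theorem it has a convergent subsequence $t_{i_k} \to t \in [0,1]$.

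With this subsequence fixed, I would then pass to the corresponding subsequence $\{z_{i_k}\}_k$ and compute its limit directly. Since $x_{i_k} \to x$, $y_{i_k} \to y$ and $t_{i_k} \to t$, the continuity of the arithmetic operations in $\mathbb{R}^n$ gives
\[
	z_{i_k} = t_{i_k} x_{i_k} + (1-t_{i_k}) y_{i_k} \longrightarrow t x + (1-t) y.
\]
Thus the subsequence converges to the point $z := t x + (1-t) y$, which establishes the existence of a convergent subsequence. Because $t \in [0,1]$, the limit $z$ is itself a convex combination of $x$ and $y$, hence $z \in [x,y]$, which is the ``moreover'' part of the statement.

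There is essentially no hard step here; the claim is a routine consequence of compactness of $[0,1]$ and continuity of the convex-combination map. The only point requiring a little care is the degenerate case $x = y$ (or more generally when the endpoints coincide for infinitely many indices), but this causes no difficulty: the segment $[x_i,y_i]$ may then be a single point, yet the parametrization $z_i = t_i x_i + (1-t_i) y_i$ still holds with an arbitrary choice of $t_i \in [0,1]$, and the argument above goes through unchanged, yielding $z = x = y \in [x,y]$. Hence no special treatment is needed, and the proof consists simply of extracting the convergent subsequence of parameters and passing to the limit.
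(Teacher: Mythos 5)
Your proof is correct and follows essentially the same route as the paper: write $z_i = t_i x_i + (1-t_i)y_i$ with $t_i \in [0,1]$, extract a convergent subsequence of the parameters $t_i$ by compactness of $[0,1]$, and pass to the limit $z = tx + (1-t)y \in [x,y]$. The only cosmetic difference is that the paper verifies the convergence of $z_{i_k}$ by an explicit triangle-inequality estimate, whereas you invoke continuity of the convex-combination map; these are the same argument.
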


\begin{proof}
	From the assumption, there exists $t_i \in [0,1]$ such that 
	$z_i = t_i x_i + (1-t_i) y_i$ for each $i \in \mathbb{N}$.
	By taking a subsequence we may assume that $\{t_i\}_i$ converges to some point $t \in [0,1]$.
	Set $z = t x + (1-t)y$.
	Then we have
	\[
		\|z_i-z\| \le \|t_i(x_i-x)\|+\|(t_i-t)x\|+\|(1-t_i)(y_i-y)\|+\|(t-t_i)y\| \longrightarrow 0
	\]
	by the triangle inequality.
\end{proof}

The next lemma is the key to proving Theorem \ref{thm:main-1}.

\begin{lem}\label{lem:1}
	Let $(X,d)$ be a Hilbert geometry with a base point $o \in X$.
	Suppose that $\overline{X}$ is properly convex in $\mathbb{R}^n$. 
	Then for any $C > 0$ and $\delta > 0$ there exists a bounded subset $M_{\delta,C}$ satisfying the following:
	For any $x,y \in X$ if $x \not\in M_{\delta,C}$ and $d(x,y) \le C$ then $|x y| < \delta$.
\end{lem}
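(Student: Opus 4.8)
The plan is to argue by contradiction, exploiting the fact that the Hilbert distance between a point close to $\partial X$ and a point well inside $X$ must blow up. Suppose the statement fails for some $C>0$ and $\delta>0$, so that no bounded set works. Taking $M=B(o,k)$ for each $k\in\mathbb{N}$, I obtain points $x_k,y_k\in X$ with $d(o,x_k)>k$, $d(x_k,y_k)\le C$ and $|x_ky_k|\ge\delta$. Since $\overline{X}$ is closed and bounded, hence compact, after passing to a subsequence I may assume $x_k\to x_\infty$ and $y_k\to y_\infty$ in $\overline{X}$. Because $d(o,x_k)\to\infty$, Lemma \ref{lem:bounded} forces $x_\infty\in\partial X$, and since $d(o,y_k)\ge d(o,x_k)-C\to\infty$ the same argument gives $y_\infty\in\partial X$. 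Finally $|x_\infty y_\infty|=\lim|x_ky_k|\ge\delta>0$, so $x_\infty\neq y_\infty$.

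Next I would produce an interior point on the segment $[x_k,y_k]$ that stays a definite Euclidean distance from $\partial X$. Let $m_k$ be the Euclidean midpoint of $[x_k,y_k]$; then $m_k\to m_\infty:=\tfrac12(x_\infty+y_\infty)$, and since $x_\infty\neq y_\infty$ are distinct boundary points, the strict convexity of $\overline{X}$ (Lemma \ref{lem:4}, Theorem \ref{thm:2}(iv)) places $m_\infty$ in the interior $X$. As $[x_k,y_k]$ is a geodesic (Theorem \ref{thm:2}(iii)) and $m_k$ lies on it, $d(x_k,m_k)\le d(x_k,y_k)\le C$. The idea is to contradict this bound by showing $d(x_k,m_k)\to\infty$. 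Writing $[a_k,b_k]$ for the chord through $x_k,m_k$ with $a_k,x_k,m_k,b_k$ in this order, the definition of the Hilbert metric gives
\[
	d(x_k,m_k)=\log\frac{|x_kb_k|\,|m_ka_k|}{|x_ka_k|\,|m_kb_k|}.
\]

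The hard part will be controlling the near endpoint $a_k$, and this is precisely where strict convexity is used. I would pass to a further subsequence so that $a_k\to a_\infty$ and $b_k\to b_\infty$ in $\partial X$. Applying Lemma \ref{lem:3} to the points $x_k,m_k,y_k\in[a_k,b_k]$ shows that $x_\infty,m_\infty,y_\infty$ all lie on $[a_\infty,b_\infty]$, with the ordering $a_\infty,x_\infty,m_\infty,y_\infty,b_\infty$ preserved (weakly). The key claim is $a_\infty=x_\infty$: otherwise $a_\infty$ and $y_\infty$ would be distinct boundary points with $x_\infty$ strictly between them, so strict convexity would force $x_\infty\in X$, contradicting $x_\infty\in\partial X$. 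Consequently $|x_ka_k|\to|x_\infty a_\infty|=0$, while $|x_kb_k|,|m_ka_k|,|m_kb_k|$ converge to $|x_\infty b_\infty|,|m_\infty x_\infty|,|m_\infty b_\infty|$, all of which are positive because $m_\infty$ is interior and $x_\infty<y_\infty\le b_\infty$. Hence the displayed fraction diverges, $d(x_k,m_k)\to\infty$, and the contradiction is reached. The points I would double-check carefully are that the chords $[a_k,b_k]$ genuinely converge to the chord through the limit points (legitimizing both the ordering and the identity $a_\infty=x_\infty$ via Lemma \ref{lem:3}) and that none of the surviving Euclidean distances degenerates.
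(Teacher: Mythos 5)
Your proof is correct, and it draws on the same toolkit as the paper's own argument --- Lemma \ref{lem:bounded} to push an escaping sequence to the Euclidean boundary, compactness of $\overline{X}$, Lemma \ref{lem:3} to pass chords to a limit chord, strict convexity, and the explicit cross-ratio formula --- but it arranges those ingredients in the opposite order, so the contradiction lands in a different place. The paper first uses the cross-ratio formula quantitatively: whenever $d(x,y)\le C$ and $|xy|\ge\delta$, the Euclidean distances from $x$ to \emph{both} endpoints of the chord through $x,y$ are at least $E=\delta^2/\bigl(e^C\,\mathrm{diam_{euc}}(\overline{X})\bigr)$; it then defines $M_{\delta,C}$ explicitly as the set of all such bad $x$, assumes it unbounded, and obtains a boundary point $x_\infty$ lying in the relative interior of a limit chord (at distance $\ge E$ from its endpoints), which contradicts strict convexity via Lemma \ref{lem:4} --- a purely Euclidean contradiction. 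You run this backwards: strict convexity is used \emph{first}, to force the near endpoint of your chords to collapse onto the limit ($a_\infty=x_\infty$), and then the cross-ratio formula delivers the contradiction $d(x_k,m_k)\to\infty$ against $d(x_k,m_k)\le C$, so the contradiction lands on the metric bound rather than on convexity. Your midpoint $m_k$ is the device that replaces the paper's uniform constant $E$: it supplies a point whose limit stays in the interior, keeping the numerator of the cross ratio bounded away from zero while the denominator collapses. Two small points: citing Theorem \ref{thm:2}(iv) for $m_\infty\in X$ is off target (that item concerns unique geodesics; what you need is the definition of strict convexity, or Lemma \ref{lem:4}, together with the standard fact $\mathrm{int}(\overline{X})=X$ for an open convex set --- a fact the paper's own proof also uses implicitly); and the inequality $d(x_k,m_k)\le d(x_k,y_k)$ follows alternatively from Lemma \ref{lem:9} without invoking geodesicity. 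The ordering-preservation issue you flagged is indeed harmless: the proof of Lemma \ref{lem:3} writes each point as a convex combination of the chord endpoints, and the order of the limiting affine parameters is preserved, which is exactly what legitimizes both the ordering and the identification $a_\infty=x_\infty$.
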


\begin{proof}
	Take $x,y \in X$ arbitrarily and suppose that a chord $[\xi, \eta]$ passes through $x,y$ in this order.
	By the definition of the Hilbert metric we have
	\[
		e^{d(x,y)} = \frac{(|x y| + |x\xi|)(|x y|+|y\eta|)}{|x\xi||y\eta|}.
	\]
	If $d(x,y) \le C$ and $\delta \le |x y|$, then we obtain 
	\[
		e^C \ge \frac{(|x y| + |x\xi|)(|x y| + |y\eta|)}{|x\xi||y\eta|}
			\ge \frac{\delta^2}{|x\xi||y\eta|}.	
	\]
	Let $\mathrm{diam_{euc}(\overline{X})}$ be the diameter of $\overline{X}$ with respect to the Euclidean metric.
	By putting $E = \delta^2 / (e^C\cdot\mathrm{diam_{euc}}(\overline{X}))$ we see that 
	$|x\xi|,|y\eta|$ should satisfy $|x\xi|, |y\eta| \ge E$ 
	since $|x \xi|,|y \eta| \le |\xi \eta| \le \mathrm{diam_{euc}}(\overline{X})$.
	We also have $|x \eta| = |x y| + |y \eta| \ge E$. 
	
	With this in mind, we consider a set 
	\[
		M_{\delta,C} = \left\{\ x \in X\ \left\vert\ \exists\, y \in X, \ |x y| \ge \delta
		\text{\ and\ } d(x,y) \le C\ \right.\right\}.
	\]
	We claim that $M_{\delta,C}$ is bounded.
	Then the assertion follows.
	To verify this claim, we assume that $M_{\delta,C}$ is not bounded. 
	Then by Lemma \ref{lem:bounded} for a fixed decreasing sequence $\epsilon_i \to 0$ 
	there exists a sequence $\{x_i\}_i$ in $M_{\delta,C}$ satisfying 
	$\inf_{\zeta \in \partial X} |x_i\zeta| \le \epsilon_i$ for each $i$.
	By the definition of $M_{\delta,C}$, we have a sequence $\{y_i\}_i$ in $X$ such that 
	$|x_i y_i| \ge \delta$ and $d(x_i,y_i) \le C$.
	Applying the above argument to $x_i$ and $y_i$, 
	we get $\xi_i, \eta_i \in \partial X$ so that $x_i \in [\xi_i,\eta_i]$
	and $|x_i\xi_i|, |x_i\eta_i| \ge E$ for each $i$.
	By taking subsequences we may assume $\xi_i,\eta_i \to \xi_\infty, \eta_\infty$ ($i \to \infty$) for some 
	$\xi_\infty, \eta_\infty \in \partial X$.
	Lemma \ref{lem:3} shows that the corresponding subsequence $\{x_i\}_i$ has an accumulation point 
	$x_\infty \in [\xi_\infty,\eta_\infty]$.
	Furthermore, $x_\infty \neq \xi_\infty, \eta_\infty$ because 
	$|x_i\xi_i|,  |x_i\eta_i| \ge E$ for all $i \in \mathbb{N}$.
	However since $\inf_{\zeta \in \partial X} |x_\infty\zeta| \le \epsilon_i \to 0$, 
	we have $x_\infty \in \partial X$.
	This contradicts Lemma \ref{lem:4}.
\end{proof}

\begin{proof}[Proof of Theorem \ref{thm:main-1}]
	Fix $o \in X$ to be a base point.
	Suppose that $\partial X$ includes a non-trivial segment $[\alpha,\beta]$. 
	We take two distinct points $\xi,\eta$ in $[\alpha,\beta] \setminus \{\alpha,\beta\}$ 
	and a continuous function $f$ on $\overline{X}$ 
	which separates $\xi$ and $\eta$, i.e., $f(\xi) \neq f(\eta)$. 
	We show that the Hausdorff distance with respect to $d$ between
	$[o,\xi] \setminus \{\xi\}$ and $[o,\eta] \setminus \{\eta\}$ is finite.
	Then it follows that $f$ is not a Higson function and hence $\partial X$ is not a corona.
	Suppose that $|\alpha \xi| < |\alpha \eta|$.
	For any $x \in [o,\xi] \setminus \{\xi\}$ let $y \in [o,\eta] \setminus \{\eta\}$ be the point 
	such that the line $L$ through $x$ and $y$ is parallel to the line through $\alpha$ and $\beta$.
	Let $x', y' \in X$ be the intersection points of $L$ and $[o,\alpha]$, $[o,\beta]$ respectively.
	Then by Proposition \ref{prop:2} and Remark \ref{rem:2} we have
	\[
		\frac{|\xi \beta||\eta \alpha|}{|\xi \alpha||\eta \beta|}
		= \frac{|x y'||y x'|}{|x x'||y y'|}
		\ge e^{d(x,y)}.
	\]
	By symmetry, we have the conclusion.
	
	To show the converse,
	we note that $\overline{X}$ is a compact metric space with respect to the restriction of the Euclidean metric. 
	Since any continuous function on $\overline{X}$ is uniformly continuous,
	the assertion follows from Lemma \ref{lem:1}.
\end{proof}

Since the following argument is standard in coarse geometry, we write briefly
(for the precise definition, see \cite{H-R95, F-O13}). 

\begin{proof}[Proof of Corollary \ref{cor:1}]
	Since $\overline{X}$ is a coarse compactification, the transgression map $T_{\partial X}$ and 
	the Higson-Roe map $b_{\partial X}$ are well-defined and the following diagram is commutative:
	\[
	\begin{tikzcd}
	K_\bullet(X) \ar{dr}{c(X)} \ar{rr}{A(X)} \ar{ddr}[swap]{\partial_{\partial X}} 
	& & K_\bullet(C^\ast X) \ar{ddl}{b_{\partial X}} \\
	& KX_\bullet(X) \ar{ur}{\mu(X)} \ar[xshift=-1ex]{d}{T_{\partial X}} & \\
	& \widetilde{K}_{\bullet-1}(\partial X) & \hspace{3em}.
	\end{tikzcd}
	\]
	Here $K_\bullet(X)$ is the $K$-homology of $X$, 
	$C^\ast X$ is the Roe algebra of $X$, 
	$K_\bullet(C^\ast X)$ is the $K$-theory of $C^\ast X$, 
	$A(X)$ is the assembly map for $X$, 
	$c(X)$ is the coarsening map for $X$, 
	$\mu(X)$ is the coarse assembly map for $X$, 
	$\widetilde{K}_{\bullet-1}(\partial X)$ is the reduced $K$-homology of $\partial X$ and 
	$\partial_{\partial X}$ is a connecting map of the $K$-homology
	(see \cite[Section 6]{H-R95} and also \cite[Section 1]{F-O13}).
	 
	By Proposition \ref{prop:homeo}, $\partial_{\partial X}$ is an isomorphism.	
	Combining Propositions \ref{prop:3} and \ref{prop:1}, we see that $c(X)$ is also an isomorphism
	(see \cite[Proof of Theorem 4.8]{Emerson-Meyer} and also \cite[Section 3.2]{F-O13}).
	By tracing the diagram, we have the conclusion.
\end{proof}

\begin{rem}
	On the above setting, 
	the transgression map $T_{\partial X}$ is an isomorphism and the assembly map $A(X)$ is injective.
	We note that the coarsening map $c(X)$ is an isomorphism for any Hilbert geometry because 
	Proposition \ref{prop:1} and \ref{prop:3} do not require the domain is properly convex.
\end{rem}

%------------------------------------------
%	6:Asymptotic dimension
%------------------------------------------
\section{Asymptotic dimension}
We begin with the definition of the asymptotic dimension of a metric space.
For a family $\mathcal{U}$ of subsets of a metric space, 
the \emph{$r$-multiplicity} of $\mathcal{U}$ is the smallest number $n$ such that 
the every closed $r$-ball intersects at most $n$ elements of $\mathcal{U}$.
There are several equivalent ways to define the asymptotic dimension 
(see, for example, \cite[\S 3]{Bell-Dranishnikov}).
In this paper, we adopt the following definition.

\begin{dfn}\label{asymptotic dimension}
	Let $Y$ be a metric space.
	We say that the \emph{asymptotic dimension} $\mathrm{asdim}(Y)$ of $Y$ does not exceed $m$ if 
	for each $r > 0$ there exists a uniformly bounded cover $\mathcal{U}$ with $r$-multiplicity $\le m+1$.
	In this case we write $\mathrm{asdim}(Y) \le m$.
	If $\mathrm{asdim}(Y) \le m$ but $\mathrm{asdim}(Y) \not\le m-1$, 
	then we say that the asymptotic dimension of $Y$ is $m$.
\end{dfn}

%	6-1
%------------------------------------------
\subsection{Lower bound}

To get the lower bound we use the coarse cohomology.
For a proper metric space $Y$, $HX^m(Y)$ and $H_c^m(Y)$ denote the $m$-dimensional coarse cohomology and 
Alexander-Spanier (or equivalently, \v{C}ech) cohomology with compact supports of $Y$ respectively.
It is known that there naturally exists a \emph{character map} $c^m(Y) : HX^m(Y) \to H_c^m(Y)$.
See \cite{Roe1} and \cite{Roe2} for details.
Since the argument is standard in coarse geometry, we write briefly. 

\begin{lem}\label{lem:HX}
	Let $Y$ be a proper metric space.
	If $HX^m(Y)$ is not trivial and
	the character map $c^m(Y)$ is injective,
	then we have $m \le \mathrm{asdim}(Y)$.
\end{lem}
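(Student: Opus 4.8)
The plan is to establish the contrapositive in the sharpened form: \emph{if $\mathrm{asdim}(Y) \le m-1$, then the character map $c^m(Y)$ is the zero map}. Granting this, the two hypotheses are incompatible with $\mathrm{asdim}(Y)\le m-1$, since an injective map out of a nontrivial group is nonzero; hence $\mathrm{asdim}(Y) \not\le m-1$, which is exactly $m \le \mathrm{asdim}(Y)$. I want to stress at the outset that it is essential to kill the character map and \emph{not} $HX^m(Y)$ itself: coarse cohomology can remain nontrivial for spaces of small asymptotic dimension, and the assumption $\mathrm{asdim}(Y)\le m-1$ only limits what $c^m(Y)$ is able to detect inside ordinary compactly supported cohomology. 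This is precisely why the injectivity hypothesis is used.

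So suppose $\mathrm{asdim}(Y)\le m-1$. By definition, for every $r>0$ there is a uniformly bounded cover $\mathcal{U}_r$ of $Y$ whose $r$-multiplicity is at most $m$. In particular every point of $Y$ lies in at most $m$ members of $\mathcal{U}_r$, so a simplex of the nerve $N(\mathcal{U}_r)$ has at most $m$ vertices; thus $|N(\mathcal{U}_r)|$ is a simplicial complex of dimension at most $m-1$, and therefore $H^m_c(|N(\mathcal{U}_r)|)=0$. Letting $r$ run through a sequence tending to infinity produces an anti-\v{C}ech system. I would then invoke Roe's description of the character map through such a system (\cite{Roe1}, \cite{Roe2}): a partition of unity subordinate to $\mathcal{U}_r$ yields a proper map $\pi_r : Y \to |N(\mathcal{U}_r)|$, and $c^m(Y)$ factors, in the limit over $r$, through the groups $H^m_c(|N(\mathcal{U}_r)|)$ via the induced maps $\pi_r^{\ast}$ on compactly supported cohomology.

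Since each of these groups vanishes, this factorization forces $c^m(Y)=0$, and the argument of the first paragraph then gives the conclusion $m \le \mathrm{asdim}(Y)$.

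The main obstacle is the middle step: making rigorous that $c^m(Y)$ factors through the compactly supported cohomology of the nerves of an anti-\v{C}ech system. This requires unwinding Roe's cochain-level construction of the character map and verifying that the maps $\pi_r$ are proper (so that pullback genuinely acts on compactly supported cohomology) and that the factorizations are compatible as the scale $r$ increases. A secondary, purely technical point is the passage from the $r$-multiplicity in our definition of asymptotic dimension to the combinatorial dimension of the nerve; this is routine, since a bound of $m$ on the $r$-multiplicity bounds the number of members of $\mathcal{U}_r$ through any single point by $m$, but it must be stated carefully so that the dimension count lands at $m-1$ and not at $m$.
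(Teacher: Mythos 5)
Your proposal is correct and follows essentially the same route as the paper's proof: both argue the contrapositive, use $\mathrm{asdim}(Y)\le m-1$ to produce an anti-\v{C}ech system whose nerves have dimension at most $m-1$ (hence vanishing $H_c^m$), and then factor the character map through the compactly supported cohomology of a nerve via a partition-of-unity map (the paper justifies this factorization by Wright's metrizability of the nerve plus coarse invariance of $HX^*$, where you defer to Roe's cochain-level construction), concluding $c^m(Y)=0$ and deriving the contradiction from injectivity and nontriviality. The one point to tighten is your claim that letting $r\to\infty$ "produces an anti-\v{C}ech system": covers with bounded $r$-multiplicity need not coarsen one another, so one must first enlarge them (e.g.\ pass to $r/2$-neighborhoods of the members) to obtain Lebesgue numbers exceeding the meshes of the preceding covers --- this is precisely what the paper's citation of Roe's Theorem 9.9(c) packages.
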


\begin{proof}
	We can assume that $m' = \mathrm{asdim}(Y) < \infty$.
	Then we have an anti-\v{C}ech system $\{\mathcal{U}_k\}_k$ such that
	each nerve complex $|\mathcal U_k|$ satisfies $H_c^m(|\mathcal{U}_k|)=0$
	for any $m > m'$ by \cite[Theorem 9.9(c)]{Roe2}.
	We take a partition of unity $\rho$ subordinate to the cover $\mathcal{U}_k$ of $Y$ for some $k$.
	Then it defines a proper continuous map $\kappa : Y \to |\mathcal{U}_k|$.
	Since $|\mathcal{U}_k|$ admits a proper metric
	which is coarsely equivalent to $Y$ by $\kappa$ \cite{Wright}
	and the coarse cohomology is coarsely invariant,
	the character map $c^m(Y) : HX^m(Y) \to H_c^m(Y)$ factors through $H_c^m(|\mathcal{U}_k|)$.
	The map must be a $0$-map if $m > m'$.
	If the character map $c^m(Y)$ is injective,
	then we have $HX^m(Y)=0$.
\end{proof}

\begin{prop}\label{prop:below}
	The asymptotic dimension of any $m$-dimensional Hilbert geometry is at least $m$.
\end{prop}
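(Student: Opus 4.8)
The plan is to apply Lemma \ref{lem:HX}, so it suffices to exhibit a nonzero class in $HX^m(X)$ on which the character map $c^m(X)$ is injective. The natural candidate is the fundamental class of the $m$-dimensional geometry. By Proposition \ref{prop:homeo} the closure $\overline{X}$ is homeomorphic to a Euclidean closed $\epsilon$-ball $B_{euc}(o,\epsilon)$, so the pair $(\overline{X},\partial X)$ is homeomorphic to $(B_{euc}(o,\epsilon),\partial B_{euc}(o,\epsilon))$, i.e.\ to an $m$-disk relative to its boundary $S^{m-1}$. Consequently the Alexander--Spanier cohomology with compact supports of $X$ itself satisfies $H_c^m(X)\cong H^m(\overline{X},\partial X)\cong H^m(D^m,S^{m-1})\cong\mathbb{Z}$. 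Thus $H_c^m(X)$ is nontrivial, and the target of the character map already carries the class we want to hit.

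The remaining task is to produce a coarse cohomology class $\omega\in HX^m(X)$ with $c^m(X)(\omega)\neq 0$, which simultaneously shows $HX^m(X)\neq 0$ and gives injectivity of $c^m(X)$ on the relevant subgroup. First I would recall from \cite{Roe2} that for Euclidean space $\mathbb{R}^m$ the character map $c^m(\mathbb{R}^m):HX^m(\mathbb{R}^m)\to H_c^m(\mathbb{R}^m)\cong\mathbb{Z}$ is an isomorphism, so $\mathbb{R}^m$ carries a nonzero coarse fundamental class detected by $c^m$. The strategy is then to transport this class to $X$. Since $(X,d)$ is a proper geodesic metric space and, by Theorem \ref{thm:2}(i) together with Proposition \ref{prop:homeo}, is contractible and (as a manifold) homeomorphic to $\mathbb{R}^m$, the point is that a uniformly contractible, bounded-geometry space homeomorphic to $\mathbb{R}^m$ behaves coarsely like $\mathbb{R}^m$ at the level of its top coarse cohomology. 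Concretely I would use that $X$ is uniformly contractible (Proposition \ref{prop:3}) and with bounded coarse geometry (Proposition \ref{prop:1}) to invoke Roe's comparison between coarse cohomology and ordinary compactly supported cohomology: for such spaces the character map $c^m(X)$ in the top degree $m=\dim X$ is an isomorphism onto $H_c^m(X)$, by \cite[Theorem 9.9]{Roe2} applied to the anti-\v{C}ech system of $X$, whose nerves are homotopy equivalent to $X$ and hence to $\mathbb{R}^m$.

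I expect the main obstacle to lie precisely in justifying this last comparison rigorously rather than heuristically. The subtle step is that coarse cohomology of an arbitrary proper space need not agree with $H_c^\ast$; the agreement uses uniform contractibility to control the nerves of an anti-\v{C}ech system and conclude that each $|\mathcal{U}_k|$ is homotopy equivalent to $X$, so that $H_c^m(|\mathcal{U}_k|)\cong H_c^m(X)\cong\mathbb{Z}$ stabilizes, and then that the inverse system computing $HX^m(X)$ maps isomorphically to this stable value via $c^m(X)$. Assembling this carefully from \cite{Roe2} is where the real work is. Once the top character map is known to be injective (indeed an isomorphism) and $H_c^m(X)\neq 0$ is established, Lemma \ref{lem:HX} immediately yields $m\le\mathrm{asdim}(X)$, completing the proof.
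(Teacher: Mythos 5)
Your proposal takes essentially the same route as the paper: Lemma \ref{lem:HX}, the computation $H_c^m(X)\neq 0$ from the homeomorphism $X\cong\mathbb{R}^m$ (Proposition \ref{prop:homeo}), and the isomorphism of the character map coming from uniform contractibility. The step you flag as ``the real work'' does not need to be assembled by hand: it is precisely \cite[(3.33) Proposition]{Roe1} (the character map of a uniformly contractible proper metric space is an isomorphism), which the paper simply cites; note that bounded coarse geometry plays no role in this comparison, and your pointer to \cite[Theorem 9.9]{Roe2} is misdirected, since that theorem is the anti-\v{C}ech characterization of finite asymptotic dimension already used in the proof of Lemma \ref{lem:HX}, not a statement about the character map.
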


\begin{proof}
	Let $(X,d)$ be an $m$-dimensional Hilbert geometry. 
	Since $X$ is uniformly contractible (Proposition \ref{prop:3}) 
	we see that the character map is an isomorphism \cite[(3.33) Proposition]{Roe1}.
	On the other hand we have $H_c^m(X) = \mathbb{R}$ because $X$ is homeomorphic to $\mathbb{R}^m$.
	By Lemma \ref{lem:HX} we have $\mathrm{asdim}(X) \ge m$.
\end{proof}

%	6-2
%------------------------------------------
\subsection{Lemmas}

Let $X \subset \mathbb{R}^n$ be a non-empty bounded convex domain and $(X,d)$ the Hilbert geometry.
Fix $o \in X$ to be a base point.
We define a \emph{ray} $\ell$ as (the image of) an isometric embedding from $[0,\infty)$ into $X$
such that its image is included in a line of $\mathbb{R}^n$ and $\ell(0) = o$.

\begin{lem}\label{lem:10}
	Let $(X,d)$ be a Hilbert geometry with a base point $o$.
	For $a_2,b_2 \in X$, 
	take two chords $[a_1,a_3]$ and $[b_1,b_3]$ passing through $o, a_2$ and $o,b_2$ in this order respectively.
	Let $L_i$ $(i= 1,2,3)$ be the line through $a_i$ and $b_i$.
	If $o,a_2,b_2$ are not collinear and satisfy $d(o,a_2) = d(o,b_2)$,
	then three lines $L_1,L_2,L_3$ meet at one point in $\mathbb{R}^n \setminus X$ or are parallel.
\end{lem}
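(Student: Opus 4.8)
The plan is to pass to the plane spanned by $o,a_2,b_2$ and there to read the metric hypothesis as an equality of cross ratios, after which the statement becomes a converse to the perspectivity invariance of Proposition \ref{prop:2}. Since $o,a_2,b_2$ are not collinear they span an affine plane $H$; the chord $[a_1,a_3]$ lies on the line through $o$ and $a_2$ and the chord $[b_1,b_3]$ on the line through $o$ and $b_2$, both of which lie in $H$, so all six points $a_1,\dots,b_3$ lie in $H$ and I may take $H=\mathbb{R}^2$. Writing the Hilbert metric along each chord, the hypothesis $d(o,a_2)=d(o,b_2)$ becomes
\[
	\frac{|o a_3||a_2 a_1|}{|o a_1||a_2 a_3|}=\frac{|o b_3||b_2 b_1|}{|o b_1||b_2 b_3|},
\]
that is, the ordered quadruples $a_1,o,a_2,a_3$ and $b_1,o,b_2,b_3$ have equal cross ratios in the sense of Proposition \ref{prop:2}.

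Next I would manufacture the common point from the two outer lines and then force the middle line through it. The two chord-lines are distinct and meet only at $o$, so $a_1\ne b_1$ and $a_3\ne b_3$ and the lines $L_1,L_3$ are distinct; in $\mathbb{R}^2$ they either meet at a single point $p$ or are parallel. Assume they meet at $p$, and let $c$ be the intersection of the line $[p,a_2]$ with the chord-line through $b_1,b_3$. Now apply Proposition \ref{prop:2} to the quadruples $a_1,o,a_2,a_3$ and $b_1,o,c,b_3$: the four joining lines are $L_1$, the line $[p,o]$, the line $[p,a_2]$ and $L_3$, and all four pass through $p$ by construction. Hence
\[
	\frac{|o a_3||a_2 a_1|}{|o a_1||a_2 a_3|}=\frac{|o b_3||c\, b_1|}{|o b_1||c\, b_3|}.
\]
Comparing with the displayed hypothesis gives $|c\,b_1|/|c\,b_3|=|b_2 b_1|/|b_2 b_3|$; since the cross ratio with respect to the fixed triple $b_1,o,b_3$ determines the remaining point, $c=b_2$. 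Thus $p,a_2,b_2$ are collinear, i.e. $p\in L_2$, and $L_1,L_2,L_3$ are concurrent at $p$.

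It remains to locate $p$ outside $X$ and to treat the parallel case. A short computation (or the fact that $a_1,b_1,a_3,b_3$ are in convex position with diagonals crossing at $o$) places $p$ on $L_1$ strictly beyond one endpoint of the chord $[a_1,b_1]$; were $p$ in the open convex set $X$, the line-segment principle for convex sets would force the intervening boundary point ($a_1$ or $b_1$) into $X$, which is absurd, so $p\in\mathbb{R}^n\setminus X$. When $L_1\parallel L_3$ I would rerun the argument with $p$ replaced by the common point at infinity: projecting parallel to $L_1$ again keeps $o$ fixed and sends $a_2$ to the point $c$ on $[b_1,b_3]$ with $[a_2,c]\parallel L_1$, the ``parallel'' alternative in Proposition \ref{prop:2} yields the same cross-ratio identity, and one concludes $L_2\parallel L_1\parallel L_3$. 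The step I expect to require the most care is the self-corresponding point $o$: Proposition \ref{prop:2} literally needs four genuine joining lines, and the whole point of taking the fourth line to be $[p,o]$ (respectively the parallel to $L_1$ through $o$) is to make the invocation of Proposition \ref{prop:2} legitimate at $o$. Confirming $p\notin X$, rather than merely $p\notin\overline{X}$, is the other delicate point, and is exactly where the openness and convexity of $X$ are used.
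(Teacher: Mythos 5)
Your proposal is correct and takes essentially the same approach as the paper: both intersect $L_1$ and $L_3$ at a point $p$ (or treat the parallel case with a point at infinity), apply the perspectivity from $p$ via Proposition \ref{prop:2} together with uniqueness of the point realizing a given cross ratio to force $p$ onto $L_2$, and use convexity of $X$ to place $p$ outside $X$. The only cosmetic difference is the direction of projection: you send $a_2$ to a point $c$ on the chord $[b_1,b_3]$ and show $c=b_2$, whereas the paper sends $b_2$ to a point $q$ on $[o,a_3]$ and shows $q=a_2$, phrasing the uniqueness step via $d(o,\cdot)$ instead of the cross ratio.
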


\begin{proof}
	Note that seven points $a_1, a_2, a_3, b_1, b_2, b_3 ,o$ are on the same plane in $\mathbb{R}^n$.
	We assume that $L_1$ and $L_3$ are not parallel.
	Let $p$ be the point where $L_1$ and $L_3$ intersect.
	By the choice of $a_1, a_3, b_1, b_3$, two chords $[a_1,b_1]$ 
	and $[a_3,b_3]$ do not intersect each other in $X$.
	Since $X$ is convex, $p$ is not contained in $X$. 
	Consider a line $L'$ through $b_2$ and $p$. 
	Then $L'$ crosses the segment $[o,a_3]$ at a point $q$ (Figure \ref{fig:4}).
	By Proposition \ref{prop:2} we see that $d(o,b_2) = d(o,q)$ and thus $d(o,a_2) = d(o,q)$.
	This shows that $a_2 = q$ because $o,a_2,q$ are on the same segment $[o,a_3]$.
	For the case where $L_1$ and $L_3$ are parallel, we can show the lemma in a similar way.
\end{proof}

\vspace{-10pt}

\begin{figure}[htb!]
\begin{center}
\begin{tikzpicture}[scale=.8]
	\coordinate (A) at (0,0);
	\coordinate (B) at (2.5,-1);
	\coordinate (C) at (4,1);
	\coordinate (D) at (2,2.8);
	\coordinate (O) at (1.47,.52);
	\coordinate (A1) at (2.1,-.99);
	\coordinate (A2) at (1.2,1.17);
	\coordinate (A3) at (.86,2);
	\coordinate (B1) at (.79,-.78);
	\coordinate (B2) at (1.88,1.32);
	\coordinate (B3) at (2.6,2.71);
	\coordinate (P) at (-4,0);

	\filldraw (O) circle[radius=0.5mm] node [anchor=east]{$o$};
	\filldraw (A1) circle[radius=0.5mm] node [anchor=north]{$a_1$};
	\filldraw (A2) circle[radius=0.5mm] node [anchor=south]{$\ \ q$};	
	\filldraw (A3) circle[radius=0.5mm] node [anchor=south]{$a_3\ \ $};
	\filldraw (B1) circle[radius=0.5mm] node [anchor=north]{$b_1$};
	\filldraw (B2) circle[radius=0.5mm] node [anchor=west]{$b_2$};	
	\filldraw (B3) circle[radius=0.5mm] node [anchor=south]{$b_3$};
	\filldraw (P) circle[radius=0.5mm] node [anchor=south]{$p$};
 	
	\draw [line width=.7pt] (A) 
	.. controls +(290:1cm) and +(180:1cm) .. (B) 
	.. controls +(30:1cm) and +(270:1cm) .. (C)
	.. controls +(100:1.5cm) and +(355:1cm) .. (D)
	.. controls +(200:1cm) and +(108:1cm) .. (A);
	
	\draw (A1) -- (A3);
	\draw (B1) -- (B3);
	\draw[dashed] (P) to node[below] {$L_1$}  (A1);
	\draw[dashed] (P) to node[above] {$L_3$} (B3);
	\draw (P) to node[below] {$L'$}  (B2);
\end{tikzpicture}

\end{center}
\caption{Proof of Lemma \ref{lem:10}.}
\label{fig:4}
\end{figure} 

\begin{lem}\label{lem:6}
	For a pointed Hilbert geometry $(X,d,o)$, given two distinct rays $\ell_1$ and $\ell_2$,
	if $0 < s < t$ then 
	\[
		d(\ell_1(s),\ell_2(s)) \le d(\ell_1(t), \ell_2(t)).
	\]
\end{lem}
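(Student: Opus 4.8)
The plan is to reduce the statement to a planar comparison and then read off the monotonicity from a single perspectivity centred at the base point $o$, via Proposition \ref{prop:2}.

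First I would dispose of the degenerate case. If $\ell_1$ and $\ell_2$ are collinear they must be opposite rays (they are distinct and share $\ell_i(0)=o$), so $o$ lies between $\ell_1(s)$ and $\ell_2(s)$ and $d(\ell_1(s),\ell_2(s))=2s$, which is increasing. Otherwise $\ell_1$ and $\ell_2$ span a plane $H$; since the chord joining two points of $X\cap H$ lies in $H$, the Hilbert metric restricted to $X\cap H$ is computed inside $H$, and I may assume $n=2$. Write $u_\sigma=\ell_1(\sigma)$ and $v_\sigma=\ell_2(\sigma)$; for $\sigma>0$ the points $o,u_\sigma,v_\sigma$ are not collinear and satisfy $d(o,u_\sigma)=d(o,v_\sigma)=\sigma$. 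Hence Lemma \ref{lem:10} applies to each pair: the chord $C_\sigma$ carrying $u_\sigma,v_\sigma$ passes through one fixed point $p\notin X$ (or all the $C_\sigma$ are parallel). The point is fixed because the two outer lines occurring in Lemma \ref{lem:10}, namely the line through the backward endpoints of $\ell_1,\ell_2$ on $\partial X$ and the line through their forward endpoints, do not depend on $\sigma$. Consequently $\{C_\sigma\}$ is a pencil through $p$, so for $s<t$ the chords $C_s$ and $C_t$ meet only at $p\notin X$ and are therefore disjoint (nested) inside $X$.

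Next I would run the central projection from $o$. Denote by $E^1_\sigma,E^2_\sigma\in\partial X$ the endpoints of $C_\sigma$, labelled so that the order along $C_\sigma$ is $E^1_\sigma,u_\sigma,v_\sigma,E^2_\sigma$. Projecting $C_s$ onto $C_t$ from the centre $o$ sends $u_s\mapsto u_t$ and $v_s\mapsto v_t$ (since $u_s,u_t\in\ell_1$ and $v_s,v_t\in\ell_2$) and sends $E^i_s$ to $\widetilde{E^i}:=\mathrm{line}(o,E^i_s)\cap C_t$. All four lines $\mathrm{line}(o,u_s),\mathrm{line}(o,v_s),\mathrm{line}(o,E^1_s),\mathrm{line}(o,E^2_s)$ pass through $o$, so Proposition \ref{prop:2} gives
\[
e^{d(u_s,v_s)}=\frac{|u_s E^2_s|\,|v_s E^1_s|}{|u_s E^1_s|\,|v_s E^2_s|}=\frac{|u_t\widetilde{E^2}|\,|v_t\widetilde{E^1}|}{|u_t\widetilde{E^1}|\,|v_t\widetilde{E^2}|}.
\]
Thus $e^{d(u_s,v_s)}$ and $e^{d(u_t,v_t)}$ are now both cross ratios of the same inner pair $u_t,v_t$, taken against the projected points $\widetilde{E^1},\widetilde{E^2}$ and against the genuine endpoints $E^1_t,E^2_t$ of $C_t$ respectively.

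Finally I would compare these two cross ratios. The key geometric claim is that $\widetilde{E^1}$ and $\widetilde{E^2}$ lie strictly outside the chord $[E^1_t,E^2_t]$, beyond $E^1_t$ and $E^2_t$ respectively. Indeed, since $o\in X$ and $E^i_s\in\partial X$, the ray from $o$ through $E^i_s$ leaves $\overline X$ exactly at $E^i_s$ by convexity; and because $u_s\in(o,u_t)$ and $v_s\in(o,v_t)$, the points $u_t,v_t$ lie on the side of the line $C_s$ opposite to $o$, so, using that $C_s,C_t$ are nested, $\widetilde{E^i}$ is reached only beyond $E^i_s$, hence outside $\overline X$ and beyond the corresponding endpoint of $C_t$. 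Granting this, the six relevant points occur on $C_t$ in the order $\widetilde{E^1},E^1_t,u_t,v_t,E^2_t,\widetilde{E^2}$, and choosing a coordinate on $C_t$ the desired inequality reduces, after cancellation, to the identity $(d-b)(c-a)-(c-b)(d-a)=(b-a)(d-c)\ge 0$ applied to each of the two outer points, yielding $e^{d(u_t,v_t)}\ge e^{d(u_s,v_s)}$. The hard part will be precisely this ordering step: pinning down the position of $\widetilde{E^1},\widetilde{E^2}$ relative to $E^1_t,E^2_t$, in particular checking that each $\widetilde{E^i}$ lands beyond the endpoint on its own side rather than the other, and handling the parallel subcase where $p$ is at infinity and the central projection becomes the appropriate parallel projection.
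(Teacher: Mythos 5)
Your reduction to a plane, the appeal to Lemma \ref{lem:10} to place all the chords $C_\sigma$ in a pencil through a fixed point $p\notin X$ (or a parallel pencil), and the perspectivity identity from Proposition \ref{prop:2} are all correct, and to that point you are on the paper's own track. The gap is the step you defer as ``the hard part'', and it is not merely hard: the claimed ordering $\widetilde{E^1},E^1_t,u_t,v_t,E^2_t,\widetilde{E^2}$ is false in general. Your projection goes \emph{outward}, from the near chord $C_s$ to the line of the far chord $C_t$, so it has a vanishing point: the point $F\in\mathrm{line}(C_s)$ with $\mathrm{line}(o,F)$ parallel to $\mathrm{line}(C_t)$. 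Nothing in the hypotheses keeps $F$ off the chord $C_s$; if $F$ lies strictly between $v_s$ and $E^2_s$, then $\widetilde{E^2}$ comes back ``from infinity'' and lands beyond $E^1_t$, so both $\widetilde{E^1}$ and $\widetilde{E^2}$ sit on the same side of $C_t$, and your final cross-ratio manipulation no longer yields the inequality (the cross-ratio \emph{equality} of Proposition \ref{prop:2} survives, but the comparison does not). Such configurations genuinely occur, because your ordering claim involves parallelism and is not projectively invariant, whereas the hypotheses of the lemma are: a projective transformation whose vanishing line misses $\overline X$ restricts to an isometry between the Hilbert geometries, so it carries legitimate configurations to legitimate configurations while moving $F$ at will. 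Concretely, take $X$ the open unit disc, $o$ its centre, $\ell_1,\ell_2$ the rays in the directions $(1,1)/\sqrt2$ and $(-1,1)/\sqrt2$, and $s<t$ determined by $\ell_1(s)=(0.1,0.1)$, $\ell_1(t)=(0.7,0.7)$; then apply $T(x,y)=(x,y)/(2-x)$, which maps $\overline X$ onto a closed ellipse and fixes $o$. In the image, the line from $T(o)$ through the endpoint of $T(C_s)$ on the $\ell_1$-side meets $\mathrm{line}(T(C_t))$ at $T((6.97,0.7))\approx(-1.40,-0.14)$, which lies beyond the \emph{opposite} endpoint $T((-0.714,0.7))\approx(-0.26,0.26)$ of $T(C_t)$, not beyond the endpoint $T((0.714,0.7))\approx(0.56,0.54)$ on its own side.

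The paper's proof projects in the opposite direction, and that is precisely what makes it work: the endpoints $a_t,b_t$ of the \emph{far} chord are joined to $o$, and the segments $[a_t,o]$, $[b_t,o]$ lie in $\overline X$ by convexity, hence must cross the near chord at points $a'_s,b'_s$ situated between its endpoints; since one only intersects segments contained in $\overline X$, no vanishing point can interfere and the ordering $a_s,a'_s,\ell_1(s),\ell_2(s),b'_s,b_s$ is automatic. Proposition \ref{prop:2} then writes $e^{d(\ell_1(t),\ell_2(t))}$ as the cross ratio of $\ell_1(s),\ell_2(s)$ with respect to the inner points $a'_s,b'_s$, and Remark \ref{rem:2} (playing the role of your $(b-a)(d-c)\ge 0$ identity) shows this dominates the cross ratio with respect to $a_s,b_s$, which is $e^{d(\ell_1(s),\ell_2(s))}$. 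So the repair is to reverse the direction of the central projection; as written, the deferred ordering step cannot be completed.
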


\begin{proof}
	Suppose that two chords $[a_t, b_t]$ and $[a_s, b_s]$ pass through 
	$\ell_1(t), \ell_2(t)$ and $\ell_1(s), \ell_2(s)$ in this order respectively.
	Applying Lemma \ref{lem:10} to points on $\ell_1$ and $\ell_2$ twice,
	we notice that $[a_t,b_t]$ and $[a_s,b_s]$ do not intersect each other in $X$.
	Putting this and the convexity of $X$ together we see that $[a_t,o]$ and $[b_t,o]$ cross $[a_s, b_s]$. 
	We denote individual intersection points by $a'_s, b'_s$ so that $a_s, a'_s, b'_s, b_s$ are arranged in this order.
	The segment $[a'_s, b'_s]$ contains $\ell_1(s)$ and $\ell_2(s)$.
	By Proposition \ref{prop:2}, we have
	\[
		e^{d(\ell_1(t), \ell_2(t))}
			= \frac{|\ell_1(t)b_t||\ell_2(t)a_t|}{|\ell_1(t)a_t||\ell_2(t)b_t|} 
			= \frac{|\ell_1(s)b'_s||\ell_2(s)a'_s|}{|\ell_1(s)a'_s||\ell_2(s)b'_s|}.
	\]
	On the other hand by Remark \ref{rem:2} we also have
	\[
		\frac{|\ell_1(s)b'_s||\ell_2(s)a'_s|}{|\ell_1(s)a'_s||\ell_2(s)b'_s|}
			\ge \frac{|\ell_1(s)b_s||\ell_2(s)a_s|}{|\ell_1(s)a_s||\ell_2(s)b_s|} 
			= e^{d(\ell_1(s),\ell_2(s))}.
	\]
	This shows the lemma.
\end{proof}

The next lemma follows from the triangle inequality and Lemma \ref{lem:6}.

\begin{lem}\label{lem:11}
	Let $(X,d,o)$ be a pointed Hilbert geometry.
	For $x,y \in X$ let $\ell_x$ and $\ell_y$ be rays passing through $x$ and $y$ respectively.
	If $d(x,y) \le r$ and $d(o,x) \le d(o,y)$ then $d(\ell_x(s), \ell_y(s)) \le 2r$ for $s \le d(o,y)$.
\end{lem}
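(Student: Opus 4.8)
The plan is to reduce everything to the monotonicity provided by Lemma \ref{lem:6} together with two applications of the triangle inequality. First I would record the basic parametrization: since each ray $\ell_x$ is an isometric embedding of $[0,\infty)$ with $\ell_x(0)=o$ whose image contains $x$, we have $\ell_x(d(o,x))=x$, and likewise $\ell_y(d(o,y))=y$. Writing $s_x=d(o,x)$ and $s_y=d(o,y)$, the hypothesis reads $s_x\le s_y$, and along a single ray one has $d(\ell_x(u),\ell_x(v))=|u-v|$.

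The key observation is that by Lemma \ref{lem:6} the function $s\mapsto d(\ell_x(s),\ell_y(s))$ is non-decreasing (if $\ell_x=\ell_y$ the statement is trivial, so I may assume the rays are distinct). Hence for every $s\le s_y$ we have $d(\ell_x(s),\ell_y(s))\le d(\ell_x(s_y),\ell_y(s_y))$, and it suffices to bound the right-hand side. Since $\ell_y(s_y)=y$, I would estimate
\[
	d(\ell_x(s_y),y)\le d(\ell_x(s_y),x)+d(x,y).
\]

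For the first term, $d(\ell_x(s_y),x)=d(\ell_x(s_y),\ell_x(s_x))=s_y-s_x=d(o,y)-d(o,x)$, and the triangle inequality $d(o,y)\le d(o,x)+d(x,y)$ gives $s_y-s_x\le d(x,y)\le r$. Combining this with $d(x,y)\le r$ yields $d(\ell_x(s_y),\ell_y(s_y))\le 2r$, and the monotonicity from Lemma \ref{lem:6} then delivers the bound for all $s\le s_y$. I do not expect any real obstacle here; the only point requiring care is that both rays must be evaluated at the \emph{same} parameter value $s_y$ before invoking Lemma \ref{lem:6}, which is precisely what the constraint $s\le d(o,y)$ guarantees. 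The hypothesis $d(o,x)\le d(o,y)$ is used solely to ensure $s_y-s_x\ge 0$, so that $d(\ell_x(s_y),\ell_x(s_x))$ equals $s_y-s_x$ rather than its absolute value.
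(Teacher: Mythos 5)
Your proof is correct and follows essentially the same route as the paper: both rest on the triangle inequality to bound $d(\ell_x(t),\ell_y(t))$ at a parameter $t$ between $d(o,x)$ and $d(o,y)$, and then invoke the monotonicity of $s\mapsto d(\ell_x(s),\ell_y(s))$ from Lemma \ref{lem:6}. The only (immaterial) difference is that you specialize to $t=d(o,y)$, where the term $d(y,\ell_y(t))$ vanishes, whereas the paper carries out the estimate for all $t\in[d(o,x),d(o,y)]$.
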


\begin{proof}
	Note that for any $d(o,x) \le t \le d(o,y)$, we have 
	\[
		d(\ell_x(t),x) + d(y,\ell_y(t)) = d(o,y)-d(o,x) \le d(x,y) \le r,
	\]
	by the triangle inequality. Hence it holds
	\[
		d(\ell_x(t),\ell_y(t)) \le d(\ell_x(t),x) + d(x,y) + d(y,\ell_y(t)) \le 2r.
	\]
	By Lemma \ref{lem:6} we have $d(\ell_x(s), \ell_y(s)) \le 2r$ for any $s \le d(o,y)$.
\end{proof}

%	6-3
%------------------------------------------
\subsection{Upper bound}
Henceforth, we concentrate on 2-dimensional Hilbert geometries.
In such a case, the boundary of a ball with respect to the Hilbert metric is 
homeomorphic to the circle $\mathbb{S}^1$ by Lemmas \ref{prop:homeo} and \ref{lem:9}.
Because of this we may simply call the boundary of a ball a \emph{circle} and 
assume that each circle is endowed with the \emph{counterclockwise} (CCW) direction.
For distinct two points $a,b$ on a circle, the \emph{arc} $\widehat{a b}$ from $a$ to $b$ stands for the closed subpath on 
the circle from $a$ to $b$ in the CCW direction.

Let $R>0$. 
We consider two conditions for an arc $\widehat{ac}$:
\begin{itemize}
	\item[$\blacklozenge$]
	\emph{There exists a point $b \in \widehat{ac}$ such that $d(a,b) \ge R$.}
	\item[$\lozenge$]
	\emph{The diameter $\mathrm{diam}(\widehat{ac}) = \max_{x,y \in \widehat{ac}}d(x,y)$ is not larger than $4R$.}
\end{itemize}

\begin{lem}\label{lem:key}
	Let $R > 0$.
	If an arc $\widehat{ac}$ satisfies $\blacklozenge$ for $R$ then
	$\widehat{ac}$ can be decomposed into arcs 
	$\widehat{a_0a_1},\widehat{a_1a_2},\ldots,\widehat{a_{k-1}a_k}$ $(k \in 2\mathbb{Z}+1)$, 
	each of which satisfies $\blacklozenge$ and $\lozenge$ for $R$.
\end{lem}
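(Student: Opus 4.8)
The plan is to build the decomposition by a single greedy sweep along the arc, cutting off a new piece each time the running distance from the current cut point first reaches $R$, and then to repair the leftover short piece and the parity by a couple of controlled merges. Throughout I parametrise $\widehat{ac}$ as a continuous path $\gamma\colon[0,L]\to X$ with $\gamma(0)=a$ and $\gamma(L)=c$; since the Hilbert metric induces the Euclidean topology (Theorem \ref{thm:2}(i)), the function $t\mapsto d(\gamma(s),\gamma(t))$ is continuous, which is essentially all the geometry I will need.

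First I would run the greedy cut. Set $a_0=a$. Given a cut point $a_{i-1}=\gamma(t_{i-1})$, if the remaining arc $\gamma([t_{i-1},L])$ contains a point at distance $\ge R$ from $a_{i-1}$, let $a_i=\gamma(t_i)$ be the first such point; then $d(a_{i-1},a_i)=R$ and, by the intermediate value theorem, every point of $\widehat{a_{i-1}a_i}$ lies within distance $R$ of $a_{i-1}$. Otherwise I stop. Each resulting \emph{full} piece $\widehat{a_{i-1}a_i}$ satisfies $\bigstar 1$ (its endpoints are at distance $R$) and $\bigstar 2$ (being contained in a ball of radius $R$, its diameter is at most $2R\le 4R$). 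The sweep terminates after finitely many steps: consecutive cut points satisfy $d(a_{i-1},a_i)=R$, so infinitely many of them would force $\gamma(t_i)$ to converge, a contradiction. Since $\widehat{ac}$ satisfies $\bigstar 1$, at least one cut is made, so we obtain full pieces $\widehat{a_0a_1},\dots,\widehat{a_{m-1}a_m}$ with $m\ge 1$, together with a possibly nonempty tail $\widehat{a_m c}$.

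By the stopping rule the tail contains no point at distance $\ge R$ from $a_m$, so it lies in $B(a_m,R)$ and can be absorbed: replacing the last full piece by $\widehat{a_{m-1}c}$ retains $\bigstar 1$ (it still contains $a_m$, at distance $R$ from $a_{m-1}$) and retains $\bigstar 2$, since $\widehat{a_{m-1}c}$ is covered by $B(a_{m-1},R)\cup B(a_m,R)$ and hence has diameter at most $3R$. This produces a decomposition of $\widehat{ac}$ into $m$ arcs, each satisfying $\bigstar 1$ and $\bigstar 2$.

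It remains to force the number of pieces to be odd, which is the only delicate point. If $m$ is odd we are done. If $m$ is even I would merge two consecutive pieces into one; the threshold $R$ is chosen precisely so that this stays within the budget $4R$ of $\bigstar 2$. Indeed, merging two consecutive full pieces gives an arc covered by two balls of radius $R$ whose centres are at distance $R$, hence of diameter at most $3R$, while still containing the shared cut point at distance $R$ from the new initial vertex; thus $\bigstar 1$ and $\bigstar 2$ persist and the decomposition now has $m-1$ (odd) pieces. The remaining care is the bookkeeping when few full pieces are available, in particular when $m=2$, where one merges the single full piece with the absorbed tail piece and a direct estimate gives diameter at most $4R$. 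I expect this parity repair, together with the verification that every merge respects the $4R$ bound, to be the main obstacle; by contrast the geometric input is confined to the continuity of $d$ and the compactness of the arc.
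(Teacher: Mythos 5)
Your proof is correct and follows essentially the same route as the paper's: a greedy sweep cutting at the first point at Hilbert distance exactly $R$ from the previous cut, absorption of the short tail into the last full piece, and a parity repair by merging two consecutive pieces, with the same $3R$/$4R$ diameter estimates securing $\bigstar 2$. The only cosmetic differences are that the paper fixes parity by merging the \emph{first} two arcs (erasing $a_1$) rather than handling the $m=2$ case separately, and that you spell out termination of the sweep, which the paper leaves implicit.
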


\begin{proof}
	Note that $\widehat{ac}$ is homeomorphic to a closed interval.
	Since the distance function $d(a,-)\vert_{\widehat{ac}}$ is continuous, 
	there exists a unique point $a_1 \in \widehat{ac}$
	satisfying $d(a,a_1) = R$ and $d(a,b) < R$ for any $b \in \widehat{aa_1}\setminus \{a_1\}$.
	We have three cases:
	(i) $a_1 = c$.
	(ii) $a_1 \neq c$ and $\widehat{a_1c}$ does not satisfy $\blacklozenge$.
	(iii) $a_1 \neq c$ and $\widehat{a_1c}$ satisfies $\blacklozenge$.
	In the case (ii), we put $a_2=c$.
	For the case (iii), take $a_2 \in \widehat{a_1c}$ satisfying 
	$d(a_1,a_2) = R$ and $d(a_1,b) < R$ for any $b \in \widehat{a_1a_2}\setminus \{a_2\}$.
	
	By repeating this procedure, we have a sequence $a = a_0,a_1,\ldots, a_k = c$
	on $\widehat{ac}$ arranged in the CCW direction.
	For each $i$ ($i\neq k$), the arc $\widehat{a_{i-1}a_i}$ satisfies $\blacklozenge$ and $\lozenge$.
	If $\widehat{a_{k-1}a_k}$ does not satisfy $\blacklozenge$ then 
	$\widehat{a_{k-2}a_k}$ has $\blacklozenge$ and $\lozenge$.
	In such a case, we erase $a_{k-1}$ and rename $a_k$ to $a_{k-1}$.
	
	Finally if the number of resulting arcs is even then by erasing $a_1$ and renaming $a_i$ to $a_{i-1}$ for $i>1$ 
	we obtain the required arcs.
\end{proof}

\begin{prop}\label{prop:asdim}
	Let $(X,d)$ be a $2$-dimensional Hilbert geometry.
	Then the asymptotic dimension of $(X,d)$ is at most $2$.
\end{prop}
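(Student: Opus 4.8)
The plan is to build, for each $r>0$, a uniformly bounded cover $\mathcal U$ of $X$ with $r$-multiplicity at most $3$. The natural strategy is to exploit the radial/polar structure coming from the base point $o$: decompose $X$ into concentric annuli $A_k=\{x\in X: 2Rk\le d(o,x)\le 2R(k+1)\}$ (with $R$ chosen as a large multiple of $r$), and within each annulus cut the bounding circle $\partial B(o,2R(k+1))$ into arcs via Lemma \ref{lem:key}. Each arc, together with the radial rays through its endpoints, sweeps out a ``box'' in the annulus; the cover elements are these boxes. The key geometric inputs are Lemma \ref{lem:6} and Lemma \ref{lem:11}, which say that radial rays stay close on any inner sphere if they are close on an outer one, so that a box of controlled outer diameter has controlled diameter throughout the annulus. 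This gives uniform boundedness of the cover.

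First I would fix $r$ and set $R$ to be a suitable large constant (something like $R=100r$, adjusted later), then partition the radial coordinate into the intervals $[2Rk,2R(k+1)]$ and treat the annulus $A_k$ between consecutive spheres. On the outer circle $\partial B(o,2R(k+1))$, I would apply Lemma \ref{lem:key} to decompose the whole circle into an odd number of arcs each satisfying $\bigstar 1$ and $\bigstar 2$; condition $\bigstar 2$ controls the diameter of each arc by $4R$, and condition $\bigstar 1$ guarantees the arcs are not too small, which is what will eventually bound the multiplicity. For each such arc $\widehat{a_{i-1}a_i}$ I would form the box $P_{k,i}$ consisting of all points $x\in A_k$ whose radial projection onto the outer circle lands in that arc. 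Using Lemma \ref{lem:11}, two points of a box that project into the same arc of outer diameter $\le 4R$ stay within distance $\le 8R$ on every inner sphere, so $\mathrm{diam}(P_{k,i})$ is bounded by a constant depending only on $R$; this yields uniform boundedness.

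The next step is to estimate the $r$-multiplicity, i.e.\ to show any ball $B(x,r)$ meets at most three boxes. Radially, a ball of radius $r$ spans at most $2r<2R$ of the radial coordinate, so it can reach into at most two consecutive annuli $A_k,A_{k+1}$ (here the odd-cardinality trick in Lemma \ref{lem:key} is what lets one stagger the arc endpoints between consecutive levels so that the angular cuts in adjacent annuli do not line up badly and inflate the count). Within the angular direction, condition $\bigstar 1$ ensures each arc has $d$-length at least $R\gg r$, so the radial projection of $B(x,r)$, which has controlled angular spread by Lemma \ref{lem:6}, can straddle at most two adjacent arcs on a given sphere. Combining the radial and angular bounds — at most two annuli and at most two arcs, but arranged so the overlaps reduce to a total of three — gives $r$-multiplicity $\le 3$, hence $\mathrm{asdim}(X)\le 2$.

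The hard part will be the multiplicity bookkeeping at the interfaces between annuli: one must choose the arc endpoints on consecutive spheres (and the phase of the radial cuts) so that a small ball sitting near a sphere $\partial B(o,2Rk)$, and therefore seeing boxes from both $A_{k-1}$ and $A_k$, does not meet four or more boxes. This is exactly where the parity condition ($k\in 2\mathbb Z+1$) in Lemma \ref{lem:key} is used, and making the staggering precise — verifying that the angular seams of level $k$ fall strictly inside the arcs of level $k+1$, so that each ball's projection meets a bounded, correctly-counted family — is the delicate step; the diameter and boundedness estimates, by contrast, follow routinely from Lemmas \ref{lem:6} and \ref{lem:11}.
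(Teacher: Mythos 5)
Your overall architecture is the same as the paper's: concentric annuli centered at $o$, circles cut into arcs by Lemma \ref{lem:key}, cover elements swept out radially, diameters controlled by $\bigstar 2$ together with Lemmas \ref{lem:6} and \ref{lem:11}, and multiplicity $3$ obtained by interleaving the radial seams of consecutive levels. However, the one step you defer --- ``making the staggering precise'' --- is not a routine verification; it is the actual content of the proof, and the mechanism you gesture at would not deliver it. You propose to decompose each whole circle into an odd number of arcs by Lemma \ref{lem:key} and then choose ``the phase of the radial cuts'' so that the seams of level $k$ fall strictly inside the arcs of level $k+1$. But the greedy construction in Lemma \ref{lem:key} leaves essentially no freedom: once a starting point is fixed, the cut points are forced (each at distance exactly $R$ from its predecessor), so the only adjustable parameter is the single starting point, and one parameter cannot simultaneously keep the many projected seams of level $k$ away from the many new cut points of level $k+1$. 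As written, nothing rules out a small ball sitting on a sphere where a seam from below nearly coincides with a seam from above, and such a ball would meet four boxes.

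The paper resolves exactly this point by a different device. Rather than making the new cuts \emph{avoid} the old seams, it forces the radial projections of the old seams to \emph{be} cut points of the new decomposition: the decomposition of $S_{i+1}$ is obtained by pulling back, under the radial homeomorphism $\pi_{i+1}$, all arcs of the (even) decomposition of $S_i$, and then subdividing each pulled-back arc into an \emph{odd} number of arcs satisfying $\bigstar 1$ and $\bigstar 2$. This is where the parity $k\in 2\mathbb{Z}+1$ in Lemma \ref{lem:key} is actually used --- for the subdivision of each individual arc, not of the whole circle, which must consist of an \emph{even} number of arcs so that markers can be labelled alternately. With the alternating labels $x^{i+1}_0, y^{i+1}_0, x^{i+1}_1,\dots$ starting at a pulled-back marker, the odd parity forces every preimage of a $y^i$-marker to receive an $x$-label and every preimage of an $x^i$-marker to receive a $y$-label (the paper's ``admissibility''). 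Consequently the radial seams of level $i$ end at $y$-type markers of $S_{i+1}$, while the seams of level $i+1$ start at $x$-type markers; since consecutive markers bound arcs of diameter $\ge R$ by $\bigstar 1$, and the radial projection $\Pi_i(B_r)$ of an $r$-ball is an arc of diameter $\le 4r < R$ by Lemma \ref{lem:11}, that projection contains at most one marker of either type, so the ball crosses at most one seam in total and meets at most three cover elements. Your write-up supplies the surrounding routine estimates, but this inductive construction --- the heart of the argument --- is missing, and the substitute you sketch does not work.
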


\begin{proof}
	We fix $R > 4r > 0$ arbitrarily and $o$ to be a base point of $X$.
	We set $A_0 = B(o,R)$.
	For each $i \in \mathbb{N}$, we define 
	\[
		A_i = \{\ x \in X \ \vert\ iR \le d(o,x) \le (i+1)R \ \}, \ \  
		S_i = \{\ x \in X \ \vert\ d(o,x) = iR \ \}.
	\] 
	For $i>1$, set a map $\pi_i : S_{i} \to S_{i-1}$ to be the projection toward $o$,
	which is a homeomorphism (see Lemma \ref{prop:homeo}).
	
	To construct a cover of $X$ we put markers on each $S_i$ inductively.
	For each step $i$, we would like to decompose $S_i$ into an even number of arcs
	\[
		\widehat{x^i_0y^i_0},\ \widehat{y^i_0x^i_1},\ \widehat{x^i_1y^i_1},\ \ldots,\ 
		\widehat{x^i_{k_i}y^i_{k_i}},\ \widehat{y^i_{k_i}x^i_0},
	\]
	each of which satisfies $\blacklozenge$ and $\lozenge$ for $R$.	
	For $i > 1$ we require the following:
	for any $j \in \mathbb{Z}/(k_i+1)\mathbb{Z}$ there exist $p,q \in \mathbb{Z}/(k_{i-1}+1)\mathbb{Z}$ 
	such that $\pi_i(x^i_j) = y^{i-1}_p$ and $\pi_i(y^i_j) = x^{i-1}_q$.
	We say that such a decomposition of $S_i$ is \emph{admissible}.
	
	\noindent\underline{\textbf{Step $1$\ :\ }}
	Since  $S_1$ is decomposed into two arcs with the half length of $S_1$, 
	we can construct an admissible decomposition of $S_1$ by Lemma \ref{lem:key}.
	
	\noindent\underline{\textbf{Step $i+1$\ :\ }}
	Suppose that we have an admissible decomposition of $S_i$.
	We decompose $S_{i+1}$ by the arcs $\pi_{i+1}^{-1}(\widehat{x^i_{j}y^i_{j}})$, 
	$\pi_{i+1}^{-1}(\widehat{y^i_{j}x^i_{j+1}})$ ($j \in \mathbb{Z}/(k_i+1)\mathbb{Z}$).
	Let $z^i_j := \pi_{i+1}^{-1}(x^i_j)$ and $w^i_j := \pi_{i+1}^{-1}(y^i_j)$.
	Since each arc satisfies $\blacklozenge$ from Lemma \ref{lem:6},
	it can be decomposed into an odd number of arcs satisfying $\blacklozenge$ and $\lozenge$ by Lemma \ref{lem:key}.
	Note that the number of resulting arcs is even.
	Label individual end points of the arcs as
	$x^{i+1}_0 := w^i_0$ and $y^{i+1}_0,x^{i+1}_1,\ldots, y^{i+1}_{k_{i+1}}$ in the CCW direction on $S_{i+1}$.
	Then we have an admissible decomposition of $S_{i+1}$.
	
	Put $U_{0,0} = A_0$ and define $U_{i,j}$ as a bounded closed set enclosed by 
	\[
		\widehat{x^i_j x^i_{j+1}}\cup [x^i_{j+1},z^i_{j+1}] \cup \widehat{z^i_j z^i_{j+1}} \cup [z^i_j,x^i_j].
	\] 
	See Figure \ref{fig:cover}. 
	Then $\mathrm{diam}(U_{i j}) \le 10R$ by $\lozenge$ and $\mathcal{U} = \{U_{i,j}\}_{i,j}$ is a cover of $X$.

%\vspace{-10pt}
\begin{figure}[htb!]
\begin{center}
\begin{tikzpicture}[scale=.5]
	\coordinate (O) at (0,0);
	\coordinate (B) at (0:5);
	\coordinate (C) at (60:5);
	\coordinate (D) at (60:7);
	\coordinate (E) at (0:7);
	\coordinate (F) at (30:7);
	\coordinate (H) at (40:7);
	\coordinate (I) at (40:5);
	\coordinate (J) at (30:8.5);
	\coordinate (K) at (40:8.5);
	\coordinate (L) at (15:7);
	\coordinate (M) at (15:8.5);
	
	\draw[fill=lightgray]
  		($(0,0) + (0:5)$) arc (0:60:5)
  		--
		($(0,0) + (60:7)$) arc (60:0:7)
  		-- cycle;
		
	\filldraw (O) circle[radius=0.5mm] node [anchor=east]{$o$};
	\filldraw (B) circle[radius=0.5mm] node [anchor=north]{$x^i_j$};
	\filldraw (C) circle[radius=0.5mm] node [anchor=east]{$x^i_{j+1}$};	
	\filldraw (D) circle[radius=0.5mm] node [anchor=south]{$\ \ z^i_{j+1}$};
	\filldraw (E) circle[radius=0.5mm] node [anchor=north]{$\ \ z^i_j$};
	\filldraw (F) circle[radius=0.5mm] node [anchor=north]{};
	\filldraw (H) circle[radius=0.5mm] node [anchor=north]{};
	\filldraw (L) circle[radius=0.5mm] node [anchor=north]{};
	\filldraw (I) circle[radius=0.5mm] node [anchor=north]{$y^i_j\ \ $};

	\draw[dashed] (O) -- (C);
	\draw[dashed] (O) -- (B);
	\draw[dashed] (F) -- (J);
	\draw[dashed] (I) -- (H);
	\draw (H) -- (K);
	\draw (L) -- (M);
\end{tikzpicture} 
\end{center} 
\caption{A piece $U_{i,j}$ of the cover $\mathcal{U}$.}
\label{fig:cover}
\end{figure}
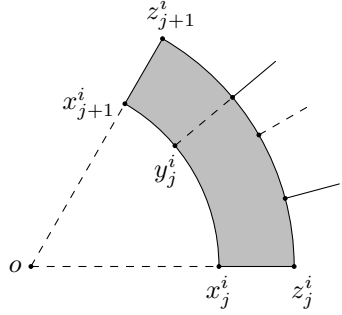

	We check that the $r$-multiplicity of $\mathcal{U}$ is at most $3$.
	Take a closed $r$-ball $B_r$ which is not included in the interior of $A_0$.
	Then there exists the smallest number $i$ so that $B_r$ is included in the interior of $A_{i-1} \cup A_i$
	since $2r < R$.
	Consider the set
	\[
		\Pi_i(B_r):= S_i \cap 
		\left\{\ \ell_x(t)\ \left\vert\  0 < t < \infty,\ \ell_x \text{\ is a ray through $x \in B_r$}\right.\ \right\}.
	\]
	Then we see that $\mathrm{diam}(\Pi_i(B_r)) \le 4r$ by Lemma \ref{lem:11}.
	Since $\Pi_i(B_r)$ is an arc on $S_i$, the inequality $4r < R$ implies that at most one of
	$x^i_0, y^i_0,\ldots,x^i_{k_i}, y^i_{k_i}$ is contained in $\Pi_i(B_r)$.
	Consequently, we have the following:
	(i) If $B_r \cap S_i = \emptyset$ then at most two elements of $\mathcal{U}$ intersect $B_r$.
	(ii) If $B_r \cap S_i \neq \emptyset$ then at most three elements of $\mathcal{U}$ intersect $B_r$.
\end{proof}

\begin{proof}[Proof of Theorem \ref{thm:main-2}]
	The assertion follows from Proposition \ref{prop:below} and \ref{prop:asdim}.
\end{proof}

%------------------------------------------
%------------------------------------------
%	References
%------------------------------------------
%------------------------------------------

\end{document}